\theoremstyle{plain}
\newtheorem{theo}{Theorem}
\newtheorem{prop}{Proposition}[section]
\newtheorem{lemma}[prop]{Lemma}
\theoremstyle{definition}
\newtheorem{example}[prop]{Example}
\newtheorem{remark}[prop]{Remark}
\newcommand\AP{\operatorname{AP}}
\newcommand\interp{\operatorname{interp}}
\newcommand{\bt}{{\mathbf t}}
\newcommand{\interior}{\operatorname{int}}
\newcommand{\ZZ}{{\mathbb{Z}}}
\newcommand{\QQ}{{\mathbb{Q}}}
\newcommand{\RR}{{\mathbb{R}}}
\newcommand{\Ss}{{\mathbb{S}}}
\newcommand{\TT}{{\mathbb{T}}}
\newcommand{\cL}{{\cal L}}
\newcommand{\cF}{{\cal F}}
\newcommand{\cG}{{\cal G}}
\begin{document}

\title{Stability of compact actions \\
and a result on divided differences}
\author{Carlos Gustavo Moreira
\and Nicolau C. Saldanha}
%  \footnote{saldanha@puc-rio.br; Departamento de Matem\'atica, PUC-Rio,
%  R. Marqu\^es de S. Vicente 255, Rio de Janeiro, RJ 22451-900, Brazil.  } }
\date{}
%\subjclass[2010]{Primary 52A55, \; Secondary 34B05}

\maketitle

\begin{abstract}
We study smooth locally free actions of $\RR^n$
on manifolds $M$ of dimension $n+1$.
We are interested in compact orbits and
in compact actions: actions with all orbits compact.
Given a compact orbit in a neighborhood of compact orbits,
we give necessary and sufficient conditions for the existence
of a $C^k$ perturbation with noncompact orbits in the given neighborhood.
We prove that if such a perturbation exists
it can be assumed to differ from the original action
only in a smaller neighborhood of the initial orbit.
As an application, for each $k$,
we give examples of compact actions
which admit $C^{k-1}$-perturbations with noncompact orbits
but such that
all $C^k$-perturbations are compact.
The main result generalizes for $k > 1$
a previous result for the case $C^1$.

A critical auxiliary result is
an estimate on divided differences.
\end{abstract}

\medskip 

%%%%%%%%%%%%%%%%%%%%%%%%%%%%%%%%%%%%%%%%%%%%%%%%%%%%%%%%%%%%%%%%%%%%%%%%%%%%%%

\section{Introduction}
\label{section:intro}

For a positive integer $n$,
let $\TT^n = H/L$
where $H \approx \RR^n$ and
$L \subset H$ is a lattice, $L \approx \ZZ^n$.
Let $M = \TT^n \times (-\epsilon,\epsilon)$,
with coordinates $(y_1,\ldots,y_n,z)$.
The coordinates $y_i$ are called \textit{horizontal};
the coordinate $z$ is \textit{vertical}.
Let $Y_1, \ldots, Y_n, Z$ be the corresponding
tangent vector fields to $M$.
The tori with equation $z = z_0$ are horizontal;
the horizontal torus $z=0$ is the \textit{base} torus.

Let $D \approx \RR^n$ be a real vector space
with coordinates $x_1, \ldots, x_n$.
Let $a_{ij}: (-\epsilon,\epsilon) \to \RR$ 
($1 \le i, j \le n$) be smooth real functions;
let $A: (-\epsilon,\epsilon) \to \RR^{n\times n}$ be such that
$A(z)$ is a matrix with entries $a_{ij}(z)$.
We assume that the matrices $A(z)$ are invertible.
Thus, $A$ is a family (indexed by $z \in (-\epsilon,\epsilon)$)
of invertible linear transformations $A(z)$
from $D$ (with coordinates $x_j$)
to $H$ (with coordinates $y_i$).
The family of tangent vector fields
\begin{equation}
\label{eq:a}
X_j(y_1,\ldots,y_n,z) = \sum_{1 \le i \le n} a_{ij}(z) Y_i,
\qquad 1 \le j \le n 
\end{equation}
defines a smooth locally free action $\theta$ of $D$ on $M$.
Notice that the Lie bracket $[X_{j_0},X_{j_1}]$ is equal to zero
(for all $j_0$, $j_1$).
The vector space $D$ is the \textit{domain} of the action $\theta$.
We call such an action \textit{homogeneous horizontal}.
{Let $\cL(D;H) \approx \RR^{n\times n}$
be the vector space of linear transformations from $D$ to $H$,
so that we have $A: (-\epsilon,\epsilon) \to \cL(D,H)$.}
For a positive integer~$j$,
$A^{(j)}: (-\epsilon,\epsilon) \to \cL(D;H)$
%  \approx \RR^{n\times n}$
is the $j$-th derivative of $A$
with respect to the variable $z \in (-\epsilon,\epsilon)$.

\begin{remark}
\label{remark:Prop10}
This may seem to be a very special situation,
but that is not the case.
Given a smooth locally free action of $\RR^n$ 
on an arbitrary smooth manifold $M$ of dimension $n+1$,
if an open neighborhood of a compact orbit
also consists of compact orbits
then Proposition~1.0 from \cite{stable}
essentially reduces the problem to the above situation.
\end{remark}

Given a homogeneous horizontal action $\theta$,
for a positive integer $k$ and $\delta > 0$,
a $\delta$-$C^k$-close family $(\tilde X_j)$
of commuting tangent vector fields satisfies:
$[\tilde X_{j_0},\tilde X_{j_1}] = 0$ (for all $j_0$, $j_1$)
and
$\|\tilde X_j - X_j\|_{C^k} < \delta$ (for all $j$).
We assume the vector fields $\tilde X_j$ to be smooth.
Actually, we only need them to be of class $C^{k+1}$;
on the other hand, we do not need uniform estimates on the $C^{k+1}$ norm
of the perturbed vector fields.
Such a family describes a local action of $D$ on $M$.
If the vector fields $\tilde X_j$ can be written in the form
\begin{equation}
\label{eq:tildea}
\tilde X_j(y_1,\ldots,y_n,z) =
\tilde a_{Z,j}(z) Z + \sum_{1 \le i \le n} \tilde a_{ij}(z) Y_i 
\end{equation}
where the coefficients $\tilde a_{Z,j}, \tilde a_{ij}$
are functions of $z$ only
(and not of $y_1, \ldots, y_n$)
then $\tilde\theta$ is called \textit{homogeneous}.

\bigbreak

We are ready to state the main result of the present paper.

\begin{theo}
\label{theo:ck}
Consider a positive integer $k$ and
a locally free homogeneous horizontal action $\theta$ of $D = \RR^n$ on
$M = \TT^n \times (-\epsilon,\epsilon)$.
Construct
{$A: (-\epsilon,\epsilon) \to \RR^{n\times n}$} as above.
The following conditions are equivalent:
\begin{enumerate}
\item{There exists a subspace $D' \subset D$ of codimension $1$
such that, for all $w \in D'$ and all $j$, $1 \le j \le k$, we have
$(A^{(j)}(0))w = 0$.}
\item{For all $\delta > 0$ there exists a $\delta$-$C^k$-close
family $\tilde\theta$ of commuting vector fields
and a point in the base torus which belongs to a noncompact orbit.}
\item{For all $\delta \in (0,\epsilon/2)$ there exists
a homogeneous $\delta$-$C^k$-close
family $\tilde\theta$ of commuting vector fields
coinciding with $\theta$ in
$M \smallsetminus (\TT^n \times (-\delta,\delta)) \subset M$
and for which every point in the base torus belongs to a noncompact orbit.}
\end{enumerate}
\end{theo}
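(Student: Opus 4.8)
The plan is to prove the cyclic chain of implications $(1)\Rightarrow(3)\Rightarrow(2)\Rightarrow(1)$; the implication $(3)\Rightarrow(2)$ is immediate since a homogeneous family supported in $\TT^n\times(-\delta,\delta)$ with every base point on a noncompact orbit is in particular a $\delta$-$C^k$-close family of commuting vector fields with a base point on a noncompact orbit. So the substance is $(1)\Rightarrow(3)$ and $(2)\Rightarrow(1)$.

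For $(1)\Rightarrow(3)$, I would construct the perturbation explicitly within the homogeneous class. Pick a complement: write $D = D' \oplus \RR v$ for a unit vector $v$ transverse to $D'$. The idea is to tilt the action slightly in the vertical direction along the $v$-direction only, so that the orbit of a point spirals in $z$. Concretely, choose a smooth bump function $\phi:(-\epsilon,\epsilon)\to\RR$ supported in $(-\delta,\delta)$ with $\phi(0)\neq 0$ but with $\phi$, and enough of its derivatives, small — this is where the key auxiliary estimate on divided differences is used, to control $\|\cdot\|_{C^k}$ of the correction term built from $\phi$ and $A$ while keeping $\phi(0)$ bounded below. Define $\tilde X_j = X_j + c_j\,\psi(z)\,Z$ where $(c_j)$ are the coordinates dual to the splitting (so $c_j$ picks out the $v$-component) and $\psi$ is built from $\phi$; one checks $[\tilde X_{j_0},\tilde X_{j_1}]=0$ because all the new coefficients depend on $z$ alone and the bracket of $Z$ with $z$-dependent horizontal fields, combined with the chosen coefficient relations, cancels — this forces a compatibility condition that is exactly solvable using hypothesis (1), namely that $A^{(j)}(0)$ annihilates $D'$ for $j\le k$, so the obstruction to commuting up to order $k$ at $z=0$ vanishes. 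Finally, the flow of $\tilde\theta$ in the $v$-direction has nonzero $z$-velocity near $z=0$, so starting from any base point the $z$-coordinate moves monotonically and the orbit is noncompact; away from $(-\delta,\delta)$ the fields are unchanged. The main obstacle here is the simultaneous bookkeeping: arranging commutativity exactly (not just to leading order) while keeping the $C^k$-size below $\delta$, which is precisely what the divided-difference estimate is designed to deliver.

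For $(2)\Rightarrow(1)$, I would argue by contraposition: if no such codimension-$1$ subspace $D'$ exists, then the linear maps $A^{(1)}(0),\ldots,A^{(k)}(0)$ have no common nontrivial kernel meeting in codimension $1$ — equivalently, the map $w\mapsto (A^{(1)}(0)w,\ldots,A^{(k)}(0)w)$ is injective — and I want to show every $\delta$-$C^k$-close commuting family keeps the base torus foliated by compact orbits for $\delta$ small. The strategy mirrors the $C^1$ case from \cite{stable}: for a commuting family $\tilde\theta$, the return/holonomy behavior of the base torus is governed, to order $k$, by the Taylor expansion of the perturbed coefficient matrix at $z=0$; commutativity forces the perturbed matrix to remain, modulo $C^k$-small error, in the same "horizontal" form, and the injectivity of the jet map means that the only way to produce a noncompact orbit (a nontrivial translation in $z$ accumulated around a homotopically nontrivial loop in $\TT^n$) would require a $C^k$-perturbation of size bounded below — a contradiction for small $\delta$. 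I expect this direction to require the most care: one must quantify "to order $k$" rigorously, presumably again via the divided-difference estimate applied in reverse (bounding the $z$-displacement of an orbit over one period in terms of the $C^k$-norm of the perturbation and the $k$-jet of $A$ at $0$), and handle the fact that $\tilde\theta$ need not be homogeneous, so one first reduces to the homogeneous case or works directly with holonomy maps of the foliation by orbits. This quantitative holonomy estimate, together with the linear-algebra dichotomy on the jets $A^{(j)}(0)$, is the heart of the converse and the hardest part of the whole theorem.
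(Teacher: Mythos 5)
Your outline of the cyclic chain $(1)\Rightarrow(3)\Rightarrow(2)\Rightarrow(1)$ agrees with the paper, and your description of $(3)\Rightarrow(2)$ is fine. But the two substantive implications contain a misattribution and a vagueness that amount to genuine gaps.

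In $(1)\Rightarrow(3)$ you say the divided-difference estimate is used there to control the $C^k$ norm of the correction; this is wrong. The divided-difference result (Proposition~\ref{prop:fh}) plays no role in $(1)\Rightarrow(3)$; it is the engine of $(2)\Rightarrow(1)$. More importantly, you omit the step that actually makes the commutativity check work. Condition~1 only says $A^{(j)}(0)w = 0$ for $w\in D'$ and $1\le j\le k$, i.e.\ the $k$-jet of $A$ at $z=0$ is constant along $D'$. If you simply add a term $w_j^{\ast}\beta(z)Z$ to the unmodified $X_j$, the bracket produces $\beta(z)\,A'(z)\,w_{j_0,j_1}$ with $w_{j_0,j_1}\in D'$, and this does \emph{not} vanish for $z\ne 0$. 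The paper first replaces $A$ by a $C^k$-close $\hat A$ which literally satisfies $\hat A(z)w = A(0)w$ for $w\in D'$ and $|z|$ small, and agrees with $A$ outside $(-\delta,\delta)$; only then does the bump term commute, because now $\hat A'(z)w_{j_0,j_1}=0$ on the support of $\beta$. That the preliminary replacement is $\delta/2$-$C^k$-small is precisely where Condition~1 (vanishing of the $j$-jets on $D'$, $1\le j\le k$) is used. Your ``compatibility condition that is exactly solvable using hypothesis~(1)'' gestures at this but does not supply the mechanism; as stated, the bracket would simply fail to vanish.

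For $(2)\Rightarrow(1)$ your sketch is too coarse to be checked. You correctly want a quantitative statement that a small $C^k$ perturbation cannot produce noncompact orbits if the jet map $w\mapsto (A^{(1)}(0)w,\ldots,A^{(k)}(0)w)$ has trivial kernel on every hyperplane. But the paper's argument is not a ``holonomy to order $k$'' estimate done by hand. It argues by contradiction and relies on specific machinery: Proposition~\ref{prop:stable22} (attaching to each noncompact leaf a hyperplane $H'\subset H$ via Denjoy's theorem in the irrational case, and periodic-point tori in the rational case); the averaged linear map $\tau\in\cL(H';D)$ of Equation~\eqref{equation:tau}; Lemma~\ref{lemma:stable24} (Birkhoff: $\tau$ is constant on orbits, so $\tau_i = \tau_0$ for all $i$, hence the divided difference $\bt = [z_0,\ldots,z_{k_\bullet};\tau_0,\ldots,\tau_{k_\bullet}]$ vanishes); a compactness argument on the Grassmannian of hyperplanes producing a fixed triple $(j_{q_\bullet},\omega_\bullet,W_\bullet)$; and finally Proposition~\ref{prop:fh}, which forces $\omega_\bullet\bt w_\ell > 0$, contradicting $\bt=0$. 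None of the Denjoy/rotation-number construction, the averaged $\tau$, or the Birkhoff constancy appears in your sketch, and these are not minor ``bookkeeping'' but the backbone of the proof: without the hyperplane $H'$ and the constant $\tau$ there is no object to which the divided-difference estimate can be applied. You should also note that the paper runs this direction by \emph{contradiction} (assume $\neg 1$ and $2$), not contraposition as you propose; the distinction matters because the sequence of perturbations $\tilde\theta_\ell$ with noncompact orbits is the input to the whole construction of the $H'_\ell$.
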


{In the above statement, 
the perturbed action $\tilde\theta$ can be taken to be of class
either $C^{k+1}$ or $C^\infty$.}

Theorem~\ref{theo:ck}~generalizes the main result of \cite{stable},
which is essentially equivalent to the case $k=1$
of the statement above.
The base torus is a $C^k$-stable orbit if it satisfies
none of the conditions in Theorem~\ref{theo:ck}
(and $C^k$-unstable otherwise).
The concept extends to orbits of compact actions.
Notice that the base torus is always $C^0$-unstable
(Proposition~1.6 in \cite{stable}).

\begin{remark}
\label{remark:b}
We can rewrite Equation~\eqref{eq:a} as
\begin{equation}
\label{eq:b}
Y_i = \sum_{1 \le j \le n} b_{ji}(z) X_j(y_1, \ldots, y_n, z), \quad
B(z) = (b_{ji})_{1 \le i,j \le n} = (A(z))^{-1}.
\end{equation}
Here, $B: (-\epsilon,\epsilon) \to \RR^{n \times n} = \cL(H;D)$
is a smooth path and
$B(z) = (A(z))^{-1}$ is a linear transformation from $H$ to $D$.
Condition 1 in Theorem~\ref{theo:ck} can be equivalently stated
in terms of $B$ and $H$ as follows:

\textit{There exists a subspace $H' \subset H$ of codimension $1$
such that, for all $v \in H'$ and all $j$, $1 \le j \le k$, we have
$(B^{(j)}(0))v = 0$.}

For a perturbed action $\tilde\theta$ we may also write
\begin{equation}
\label{eq:bt}
Y_i(p) = c_i(p) Z + \tilde Y_i(p), \qquad
\tilde Y_i(p) = 
\sum_{1 \le j \le n} \tilde b_{ji}(p) \tilde X_j(p),
\end{equation}
defining
$\tilde B = (\tilde b_{ji})_{1 \le i,j \le n}:
\TT^n \times (-\epsilon,\epsilon) \to
\RR^{n \times n} = \cL(H;D)$.
\end{remark}

\begin{example}
\label{example:dance}
Let $\varphi: (-\epsilon,\epsilon) \to \RR$ be a smooth function.
Take $n=2$ and
\begin{equation}
\label{equation:dance}
a_{11}(z) = a_{22}(z) = \cos(\varphi(z)),
\qquad
a_{12}(z) = -a_{21}(z) = \sin(\varphi(z)).
\end{equation}
Define $X_1$ and $X_2$ as in Equation \eqref{eq:a};
see Figure~\ref{fig:dance}.
Let $k \ge 1$ be such that
$\varphi^{(k)}(0) \ne 0$ and
$\varphi^{(j)}(0) = 0$ for all $j$, $1 \le j < k$.
Then the base torus is both $C^k$-stable 
and $C^{k-1}$-unstable.
Example~1.4 in \cite{stable} considers the case $\varphi(z) = 2\pi z$
(for which the base torus is $C^1$-stable).
\end{example}

\begin{figure}[ht]
\centering
\def\svgwidth{\textwidth}
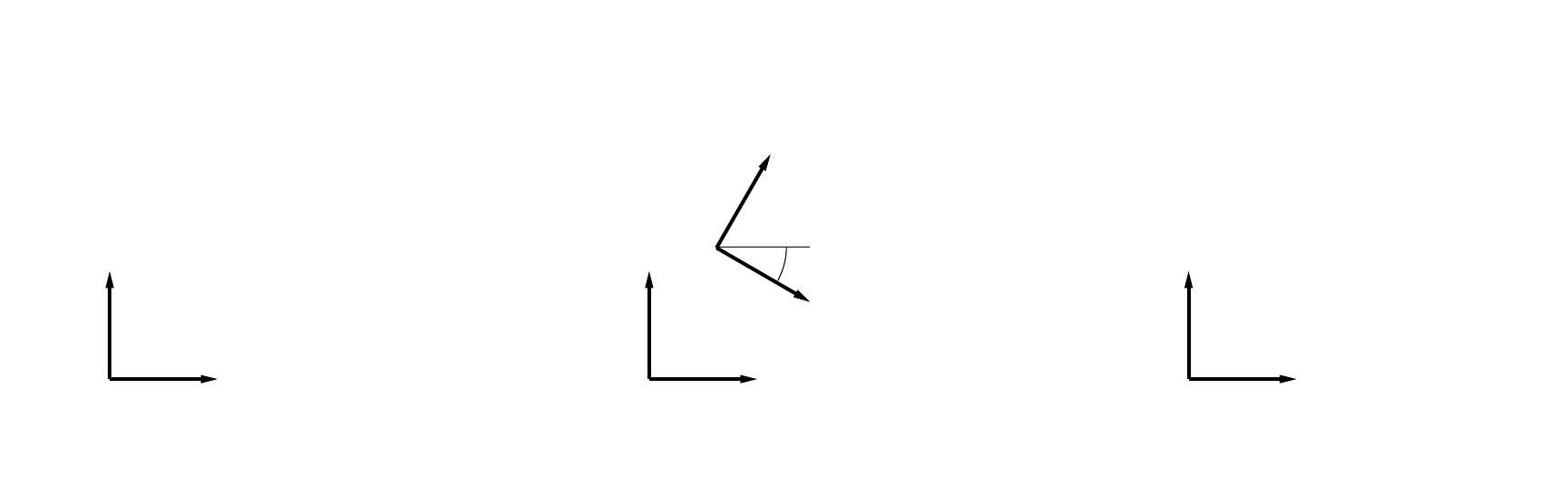
\caption{A compact action of $\RR^2$ on $\TT^2 \times (-\epsilon,\epsilon)$.}
\label{fig:dance}
\end{figure}

More generally, a locally free action of $\RR^n$
on $M^{n+1}$ is \textit{compact} if all its orbits are compact
(and therefore tori, and perhaps a few Klein bottles).
As we have seen in Remark~\ref{remark:Prop10},
up to minor adjustments,
appropriate coordinates in a neighborhood of an orbit
turn a compact action into a homogeneous horizontal action.
The concept of $C^k$ stability thus applies to orbits of a compact action.

\bigskip

\begin{example}
\label{example:dance2}
We consider a more global variant of Example~\ref{example:dance}.
For fixed $m \in \ZZ$, 
let $\varphi: \RR \to \RR$ be a smooth function
satisfying $\varphi(z+1) = \varphi(z)+2\pi m$ (for all $z \in \RR$).
Define a compact action $\theta$ of $\RR^2$ on $\TT^3 = \RR^3/\ZZ^3$
(with coordinates $y_1, y_2, z$)
by Equation~\eqref{equation:dance}.
It follows from \cite{stable} that
if $\varphi$ has no critical points then $\theta$ is $C^1$ stable
but not $C^0$ stable.
More precisely, there exists $\delta > 0$ such that:
if $\tilde\theta$ is $\delta$-$C^1$-close to $\theta$
then $\tilde\theta$ is a compact action.

More generally, assume that $\varphi$ has
at least one critical point of order $k$
and no critical point of higher order.
It follows from Theorem~\ref{theo:ck}
that
$\theta$ is $C^k$-stable but not $C^{k-1}$-stable.
For instance, if $\varphi(z) = \cos(2\pi z)$ then
$\theta$ is $C^2$-stable but not $C^1$-stable.
If $\varphi(z) = \cos^3(2\pi z)$ then
$\theta$ is $C^3$-stable but not $C^2$-stable.
\end{example}

\bigskip

\begin{remark}
\label{remark:generic}
{For $n > 2$ and $\theta$ a generic compact action,
all orbits are $C^1$-stable (Example~1.5 in \cite{stable}).
For $n = 2$, this is not the case.
Take for example $A: \RR/\ZZ \to \RR^{2\times 2}$,
\[ a_{11}(z) = 2+\sin(2\pi z), \quad a_{22}(z) = 2-\cos(2\pi z), \quad
a_{12}(z) = a_{21}(z) = 0 \]
so that $\det(A'(z)) = 2\pi^2\sin(4\pi z)$.
The base torus is $C^2$-stable but not $C^1$-stable.
Moreover, for any $C^2$ nearby horizontal homogeneous
$\tilde A: \RR/\ZZ \to \RR^{2\times 2}$,
there exists $\tilde z_0 \approx 0$
with $\det(\tilde A'(\tilde z_0)) = 0$:
the torus $z = \tilde z_0$ is therefore not $C^1$-stable.}

{On the other hand:
for $n = 2$ and a  generic compact action,
all orbits are $C^2$-stable.
Indeed, we may assume by genericity
that $\det(A'(z))$ has isolated zeroes.
Let $z_0$ be one such value of $z$.
Again by genericity, we may assume that $A'(z_0)$ has rank $1$
and that the intersection of the kernels of
$A'(z_0)$ and $A''(z_0)$ is $0$,
as in the previous example.
The torus $z = z_0$ is then $C^2$-stable (but not $C^1$-stable).}
\end{remark}

\bigbreak

As for the case $k=1$ (discussed in \cite{stable}),
the implications $1 \to 3$ and $3 \to 2$ are not hard.
Indeed, most of \cite{stable} is dedicated to proving $2 \to 1$
in the case $k=1$.
The proof of the general case follows a similar strategy.
The main difference is that,
where in \cite{stable} we use the mean value theorem,
in the case $k > 1$ we need a subtler result
concerning divided differences:
% (Lemma~\ref{lemma:fh}):

\bigbreak

\begin{prop}
\label{prop:fh}
Let $k \ge 2$, $\epsilon > 0$,
$0 < B_k \le B_{k+1}$ and $f_0(z) = z$.
Then there exists $C_k > 1$
(depending only on $k$, $\epsilon$ and $B_{k+1}$)
such that,
for all $h_0 \in C^{\infty}([-\epsilon,+\epsilon];\RR)$,
for all $f, h \in C^k([-\epsilon,+\epsilon];\RR)$ and for all $\eta > 0$,
if $B = h_0^{(k)}(0)$ and
\[
\begin{gathered}
\|h_0\|_{C^{k+1}} \le B_{k+1}, \qquad
|B| \ge B_k, \\
0 < \eta < \frac{\min\{1,B_k\}}{2C_k}, \quad
\|h-h_0\|_{C^k} < \eta, \quad
\|f-f_0\|_{C^k} < \eta, \quad f(0) > 0
\end{gathered}
\]
then the following property holds.

Set $z_j = f^j(0)$;
for $\tilde z_0 \in (z_{-2k},z_{2k})$,
set $\tilde z_j = f^j(\tilde z_0)$.
We then have
\[ \frac{B-C_k{\eta}}{k!}  <
[z_0,z_1,\ldots,z_k;h(\tilde z_0),h(\tilde z_1),\ldots,h(\tilde z_k)]
< \frac{B+C_k{\eta}}{k!}. \]
In particular, the divided difference
$[z_0,z_1,\ldots,z_k;h(\tilde z_0),h(\tilde z_1),\ldots,h(\tilde z_k)]$
has the same sign as $B$ and is therefore nonzero.
\end{prop}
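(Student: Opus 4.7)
The plan is to represent the divided difference
\[ D := [z_0,\ldots,z_k;\,h(\tilde z_0),\ldots,h(\tilde z_k)] \]
as $\tilde h^{(k)}(\xi)/k!$ for an auxiliary $C^k$ function $\tilde h = h\circ\phi$, where $\phi$ is a smooth interpolation of the pairs $(z_j,\tilde z_j)$ that stays close in $C^k$ to the translation $z \mapsto z + \tilde z_0$. The first step is to get precise control on the iterates. Writing $g := f - f_0$, we have $\|g\|_{C^k} < \eta$ and $g(0) = f(0) > 0$; the differences $\delta_j := \tilde z_j - z_j$ satisfy $\delta_0 = \tilde z_0$ and $\delta_{j+1} - \delta_j = g(\tilde z_j) - g(z_j)$. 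The mean value theorem gives $|\delta_{j+1} - \delta_j| \le \eta|\delta_j|$, and iteration yields $|\delta_j - \tilde z_0| \le C\eta|\tilde z_0|$ for $|j| \le 2k$; similar iterated arguments bound the higher divided differences of $(\delta_j)$ at the nodes $(z_j)$ by $O(\eta)$.

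Using these bounds I would construct $\phi \in C^k$ on a neighbourhood of $[z_0, z_k]$ with $\phi(z_j) = \tilde z_j$, $\phi'(z) = 1 + O(\eta)$, and $\phi^{(m)}(z) = O(\eta)$ for $2 \le m \le k$ --- for instance by embedding $f$ in a local flow $\Phi^t$ (which exists because $g(0) > 0$ so $f$ has no fixed points near $0$) and setting $\phi(z) := \Phi^{\alpha(z)}(\tilde z_0)$ with $\alpha$ the time coordinate satisfying $\alpha(z_j) = j$; the generator of $\Phi^t$ is $C^{k-1}$-close to a constant field, which gives the claimed bounds via the chain rule. Setting $\tilde h := h\circ\phi$, we have $\tilde h(z_j) = h(\tilde z_j)$, so Cauchy's mean-value form for divided differences supplies $\xi \in [z_0,z_k]$ with $D = \tilde h^{(k)}(\xi)/k!$. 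Fa\`a di Bruno's formula gives
\[ \tilde h^{(k)}(\xi) = h^{(k)}(\phi(\xi))\,(\phi'(\xi))^k + R(\xi), \]
where each term of $R(\xi)$ contains at least one factor $\phi^{(m)}(\xi)$ with $m \ge 2$, hence $R(\xi) = O(\eta)$. The main factor satisfies $(\phi'(\xi))^k = 1 + O(\eta)$ and
\[ |h^{(k)}(\phi(\xi)) - B| \le \|h - h_0\|_{C^k} + \|h_0\|_{C^{k+1}}|\phi(\xi)| < \eta + B_{k+1}|\phi(\xi)|, \]
which is $O(\eta)$ since $|\phi(\xi)| \le |\tilde z_0| + \|\phi'\|_\infty|\xi| = O(k\eta)$. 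Assembling these contributions yields $\tilde h^{(k)}(\xi) = B + O(\eta)$ with a constant $C_k$ depending only on $k$, $B_{k+1}$ and $\epsilon$, hence $|D - B/k!| < C_k\eta/k!$; the hypothesis $\eta < \min\{1,B_k\}/(2C_k)$ then forces $D$ to have the same sign as $B$.

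The main obstacle is the construction of $\phi$ with the stated derivative bounds. The delicate point is that the orbit spacing $z_{j+1}-z_j = g(z_j)$ can be small compared to $\eta$, so naive Lagrange interpolation through $4k+1$ nearby points does not control $\phi^{(m)}$ for $m \ge 2$; the structured recursion for $\delta_j$ must be exploited to show that the divided differences of $(\tilde z_j)$ at $(z_j)$ mimic those of a pure translation up to $O(\eta)$, whereupon either the flow-box construction above or a carefully chosen Hermite-type interpolation delivers the required $C^k$-proximity to the translation, and the Fa\`a di Bruno estimate does the rest.
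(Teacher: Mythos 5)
Your approach is genuinely different from the paper's, and it has a real gap that you yourself flag but do not close. You try to manufacture a single $C^k$ interpolant $\phi$ with $\phi(z_j)=\tilde z_j$, $\phi'\approx 1$, and $\phi^{(m)}=O(\eta)$ for $m\ge 2$, by realizing $\phi$ as a fractional iterate of $f$ inside a flow $\Phi^t$ whose generator $v$ is ``$C^{k-1}$-close to a constant field.'' The trouble is that the fundamental domain of $f$ has length $\delta=f(0)=g(0)$, which under the hypotheses satisfies $0<\delta<\eta$ and can be arbitrarily small relative to $\eta$. The Abel/flow construction forces $v\approx\delta$ on this fundamental domain, and the cocycle relation $v\circ f=f'\cdot v$ prescribes the jump of $v^{(m)}$ across the fundamental domain to be of size $O(\eta\delta)$; a function of size $\delta$ oscillating by $O(\eta\delta)$ on an interval of length $\delta$ generically has $m$-th derivatives of order $\eta\delta^{1-m}$, which blows up for $m\ge 2$ as $\delta\to 0$. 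Consequently the fractional iterate $\phi=\Phi^{\alpha(\tilde z_0)}$ is not uniformly $C^k$-close to a translation, the remainder $R(\xi)$ in your Fa\`a di Bruno expansion of $\tilde h^{(k)}$ is not $O(\eta)$, and the proposed reduction to a single mean-value estimate does not go through. You note this as ``the main obstacle,'' but the flow-box idea as stated does not remove it; the problem is precisely uniform in the small parameter $\delta$, and nothing in the sketch controls that uniformity.

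The paper does not need any interpolating $\phi$. Instead it writes $g(\tilde z_0)=\sum_{j}u_j\,\tilde h_j(\tilde z_0)$ with $\tilde h_j=h\circ f^j$ and weights $u_j=\prod_{i\ne j}(z_i-z_j)^{-1}\sim\delta^{-k}$, estimates $\|\tilde h_j-h_0\|_{C^k}=O(\eta)$ (Lemma~\ref{lemma:tildehj}, proved from Fa\`a di Bruno applied to the honest integer iterates $f^j$, which \emph{are} uniformly $C^k$-close to the identity), then replaces each $\tilde h_j$ by its degree-$k$ Taylor polynomial $\tilde H_j$ with leading coefficient pinned to $B/k!$. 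The crucial balance is $|\tilde h_j-\tilde H_j|=O(\eta|z|^k)=O(\eta\delta^k)$ against $|u_j|=O(\delta^{-k})$: the two powers of $\delta$ cancel and $|G-g|=O(\eta)$ where $G=\sum u_j\tilde H_j$ is a polynomial of degree at most $k$. The values $G(z_j)$ are then controlled by the mean-value form at the nodes, and $G$ is controlled on the whole interval by the quasi-AP interpolation estimate (Lemma~\ref{lemma:interp}). This $\delta^k$-versus-$\delta^{-k}$ cancellation, made possible by working with the fixed linear combination of fixed smooth functions $\tilde h_j$ rather than with any single interpolant, is the key idea that is missing from your proposal; without it, or some substitute for it, the argument cannot be made uniform in $\delta$.
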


{
Since $C_k$ is chosen after $k$ and $\epsilon$,
we can force $\eta$ to be much smaller than $\epsilon/k$.
Together with $\|f - f_0\|_{C^k} < \eta$,
this implies that both $z_j = f^j(0)$ and $\tilde z_j = f^j(\tilde z_0)$
are well defined and belong to the interval $(-\epsilon,\epsilon)$
(for all $j$ with $|j| \le 2k$).}

\bigbreak

In Section~\ref{section:dd} we revise divided differences
and prove Proposition~\ref{prop:fh}.
Section~\ref{section:proofck} begins with
the proofs of the easy implications
in Theorem~\ref{theo:ck} (i.e., $1 \to 3$ and $3 \to 2$).
After reviewing several results and constructions from \cite{stable}
we complete the proof 
of Theorem~\ref{theo:ck} by proving the remaing implication $2 \to 1$.

This paper can be considered a sequel of \cite{stable}.
Other works related to that paper are
\cite{BegazoSaldanha2004, BegazoSaldanha2005, Ccoyllo}.

\bigskip

Support from CNPq, CAPES and Faperj (Brazil) 
is gratefully acknowledged.
The authors thank Dimitar Dimitrov 
for helpful conversations about divided differences
and the referee for several careful remarks.

\bigbreak

%%%%%%%%%%%%%%%%%%%%%%%%%%%%%%%%%%%%%%%%%%%%%%%%%%%%%%%%%%%%%%%%%%%%%%%%%%%%%%

\section{Divided differences}
\label{section:dd}

Consider real numbers $x_0 < x_1 < \cdots < x_k$ and 
$y_0, y_1, \ldots, y_k$.
There exists a unique real polynomial
\begin{equation}
\label{equation:interp}
\begin{aligned}
P(X) &= c_k X^k + \cdots + c_j X^j + \cdots + c_1 X + c_0 \\
&= \interp([x_0,\ldots,x_k],[y_0,\ldots,y_k],X) 
\end{aligned}
\end{equation}
of degree at most $k$
with the property that $P(x_i) = y_i$ (for all $i$, $0 \le i \le k$).
The coefficient of degree $k$ is written as
\[ c_k = [x_0,x_1,\ldots,x_k;y_0,y_1,\ldots,y_k]. \]
The above expression is a \textit{divided difference}
and there is a substantial literature about it
(see \cite{deBoor} for an introduction and 
an extensive list of references
and \cite{LMS} for similar estimates with a different application).
Sometimes the variables $x_i$ are left implicit:
\( c_k = [y_0,y_1,\ldots,y_k] \).
In this notation we have
\begin{equation}
\begin{aligned}
P(X) &= [y_0] + [y_0,y_1] (X-x_0) + [y_0,y_1,y_2](X-x_0)(X-x_1) + \cdots \\
& \qquad \cdots + [y_0,y_1,\ldots,y_k] (X-x_0)(X-x_1)\cdots(X-x_{k-1}).
\end{aligned}
\end{equation}

If $f: [x_0,x_k] \to \RR$ is a function of class $C^k$
satisfying $f(x_i) = y_i$ (for all $i$, $0 \le i \le k$)
then there exists $\zeta \in [x_0,x_k]$ with
\begin{equation}
\label{equation:zeta}
\frac{1}{k!} f^{(k)}(\zeta) = [y_0,y_1,\ldots,y_k].
\end{equation}
Another formula is
\( [y_0,y_1,\ldots,y_k] = {\det(W)}/{\det(V)} \)
where $V$ and $W$ are the Vandermonde and almost-Vandermonde matrices
\begin{equation}
\label{equation:VW}
V = \begin{pmatrix}
1 & 1 & \cdots & 1 \\
x_0 & x_1 & \cdots & x_k \\
\vdots & \vdots & & \vdots \\
x_0^{k-1} & x_1^{k-1} & \cdots  & x_k^{k-1} \\
x_0^{k} & x_1^{k} & \cdots & x_k^{k} 
\end{pmatrix}, \quad
W = \begin{pmatrix}
1 & 1 & \cdots  & 1 \\
x_0 & x_1 & \cdots  & x_k \\
\vdots & \vdots &  & \vdots \\
x_0^{k-1} & x_1^{k-1} & \cdots  & x_k^{k-1} \\
y_0 & y_1 & \cdots &  y_k 
\end{pmatrix}.
\end{equation}
% Let $V_j$ be the $k \times k$ submatrix of $V$
% obtained by deleting the $j$-th column and the $k$-th (or last) row.
% Notice that $V_j$ is also a submatrix of $W$.
If we expand $\det(W)$ along the $k$-th row
and use the formula for the determinant of a Vandermonde matrix
we obtain
\begin{equation}
\label{equation:expand}
[y_0,y_1,\ldots,y_k] = \sum_{0 \le j \le k} u_j y_j, \qquad
u_j = \prod_{i \ne j} \frac{1}{x_i - x_j}.
\end{equation}

We are going to study cases when $x_0 < x_1 < \cdots < x_k$
is almost an AP (arithmetic progression).
More precisely, for a given $C_{\AP} \in (0,1/4)$,
$(x_j)$ is an increasing \textit{$C_{\AP}$-quasi-AP}
if $x_1 > x_0$ and, for all $j$, $0 < j \le k$,
\[ (1-C_{\AP}) (x_1 - x_0) < x_j - x_{j-1} < (1+C_{\AP}) (x_1 - x_0).\]

\bigbreak

\begin{remark}
\label{remark:1+a}
We have $(1+a)^m<(1-a)^{-m}<1+(m+1)a$ if $0<a<1/m(m+1)$.
Indeed, $(1-a)^m>1-ma$, and thus, in this case,
$(1-a)^m(1+(m+1)a)>(1-ma)(1+(m+1)a)=1+a-m(m+1)a^2>1$.
\end{remark}

\begin{lemma}
\label{lemma:quasiAP}
Let $k, r$ be positive integers and $f_0(z) = z$.
Assume that $\epsilon > 0$ and that
$f:[-\epsilon,+\epsilon]\to\RR$ satisfies
$\|f-f_0\|_{C^1} < \eta$.
Let $c\in [-\epsilon,0]$ such that $f(c)>c$. 
Then, if $\eta\le\min\left\{ \frac{\epsilon}{kr}, \frac1{(kr)^2} \right\}$,
the sequence $(x_j)_{1\le j \le k}$, $x_j=f^{rj}(c)$,
is a $kr\eta$-quasi-AP.
\end{lemma}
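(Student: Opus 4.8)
The plan is to regard $g:=f^{r}$ as a single map that is $C^{1}$-close to the identity, to express the ratio of an arbitrary gap $x_{j}-x_{j-1}$ to the first gap $x_{1}-x_{0}$ as a product of values of $g'$ by iterating the mean value theorem, and then to control that product by Remark~\ref{remark:1+a}. I would start by recording monotonicity: since $\|f-f_{0}\|_{C^{1}}<\eta\le 1$ we have $|f'(z)-1|<1$, hence $f'>0$ and $f$ is strictly increasing on $[-\epsilon,+\epsilon]$. Together with the hypothesis $f(c)>c$, an induction — apply the increasing map $f$ to $f^{i}(c)>f^{i-1}(c)$ — shows that the iterates $f^{i}(c)$ strictly increase with $i$, while $\|f-f_{0}\|_{C^{0}}<\eta$ shows that each step raises the value by less than $\eta$; hence $c\le f^{i}(c)<c+i\eta$, so the finitely many iterates $f^{0}(c),\dots,f^{rk}(c)$ (which the statement presupposes are defined) stay in $[-\epsilon,+\epsilon]$, and $x_{0}<x_{1}<\cdots<x_{k}$. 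In particular $x_{1}>x_{0}$, the first requirement of a quasi-AP, and this already settles the case $k=1$.

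Next I would estimate $g'$ and compare gaps. By the chain rule $g'(z)=\prod_{m=0}^{r-1}f'\bigl(f^{m}(z)\bigr)$ is a product of $r$ numbers each in $(1-\eta,1+\eta)$, so $g'(z)\in\bigl((1-\eta)^{r},(1+\eta)^{r}\bigr)$ for every $z\in[-\epsilon,+\epsilon]$. For $2\le j\le k$, the mean value theorem applied to $g$ on each interval $[x_{i-2},x_{i-1}]$ ($2\le i\le j$) gives $\xi_{i}\in(x_{i-2},x_{i-1})$ with $x_{i}-x_{i-1}=g'(\xi_{i})\,(x_{i-1}-x_{i-2})$; iterating,
\[
x_{j}-x_{j-1}=\Bigl(\prod_{i=2}^{j}g'(\xi_{i})\Bigr)(x_{1}-x_{0}),\qquad
\prod_{i=2}^{j}g'(\xi_{i})\in\bigl((1-\eta)^{r(k-1)},(1+\eta)^{r(k-1)}\bigr),
\]
the inclusion because $j-1\le k-1$ and $1-\eta\in(0,1)$.

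It then remains to see that $(1\pm\eta)^{r(k-1)}$ stays within $1\pm rk\eta$. Put $m:=r(k-1)$; since $r\ge 1$ one has $m\le rk-1$ and $m+1\le rk$, so $m(m+1)\le(rk-1)(rk)<(rk)^{2}$ and the hypothesis $\eta\le 1/(rk)^{2}$ forces $0<\eta<1/\bigl(m(m+1)\bigr)$. Remark~\ref{remark:1+a} then gives $(1+\eta)^{m}<1+(m+1)\eta\le 1+rk\eta$ and $(1-\eta)^{m}>1/\bigl(1+(m+1)\eta\bigr)>1-(m+1)\eta\ge 1-rk\eta$, hence
\[
(1-rk\eta)(x_{1}-x_{0})<x_{j}-x_{j-1}<(1+rk\eta)(x_{1}-x_{0})\qquad(1\le j\le k),
\]
which is precisely the assertion that $(x_{j})_{0\le j\le k}$ is a $kr\eta$-quasi-AP. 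I do not anticipate a genuine obstacle — the argument is the mean value theorem and the chain rule — the only point requiring care being the bookkeeping of exponents in the last step, which must be arranged (bounding the relevant exponent by $m=r(k-1)$, not by $rk$) so that Remark~\ref{remark:1+a} yields the sharp constant $kr\eta$ and not $(kr+1)\eta$.
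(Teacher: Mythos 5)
Your proof is correct and follows essentially the same route as the paper's: bound $(f^s)'$ between $(1-\eta)^s$ and $(1+\eta)^s$, realize the gap ratio $\frac{x_j-x_{j-1}}{x_1-x_0}$ as a value of such a derivative (you iterate the mean value theorem on $g=f^r$, the paper applies it once to $f^{r(j-1)}$ — a cosmetic difference), and finish with Remark~\ref{remark:1+a} using the exponent $r(j-1)\le r(k-1)$. Your extra bookkeeping on monotonicity and the $k=1$ case is sound but not essential.
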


\begin{proof}
For all $x \in [-\epsilon,\epsilon]$ we have
$|f(x) - x| < \eta \le \epsilon/(kr)$.
We thus have $f^{rj}(c) < f^{r(j+1)}(c) < f^{rj}(c) + \epsilon/k$
and therefore (by induction on $j$)
$x_j \in [-\epsilon,\epsilon]$ for all $j$, $0 \le j \le k$.

We have $0 < 1-\eta<f'(x)<1+\eta$ for every $x\in [-\epsilon,+\epsilon]$,
so $(1-\eta)^s<(f^s)'(x)<(1+\eta)^s$ for $0<s\le rk$, and thus 
{(by the mean value theorem)}
\[ \frac{x_j-x_{j-1}}{x_1-x_0}=
\frac{f^{r(j-1)}(f^r(c))-f^{r(j-1)}(c)}{f^r(c)-c}
\in ((1-\eta)^{r(j-1)},(1+\eta)^{r(j-1)}). \]
Remark~\ref{remark:1+a} implies 
\( ((1-\eta)^{r(j-1)},(1+\eta)^{r(j-1)}) \subset (1-kr\eta,1+kr\eta)\),
completing the proof.
\end{proof} 

\begin{lemma}
\label{lemma:interp}
Given $k > 0$, $\epsilon\in (0,1)$, for $\tilde C_k=(\frac{2(1+\epsilon)}{1-\epsilon})^k$, we have the following property.

Let $k\ge 2$, $x_0 < \cdots < x_k$ be an increasing $\epsilon$-quasi-AP;
let $P$ be a real polynomial of degree at most $k$.
If $|P(x_j)| \le C$ for all $j$ ($0 \le j \le k$) then
$|P(x)| \le C\tilde C_k$ for all $x \in [x_0,x_k]$.
\end{lemma}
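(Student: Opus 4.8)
The plan is to bound the interpolation polynomial $P$ on the whole interval $[x_0,x_k]$ by its values at the nodes, using the Lagrange interpolation formula. Since $P$ has degree at most $k$ and takes the values $P(x_j)$ at the $k+1$ distinct points $x_0 < \cdots < x_k$, we have the exact representation
\[
P(x) = \sum_{0 \le j \le k} P(x_j) \ell_j(x), \qquad
\ell_j(x) = \prod_{i \ne j} \frac{x - x_i}{x_j - x_i}.
\]
Hence $|P(x)| \le C \sum_{j} |\ell_j(x)|$ for every $x \in [x_0,x_k]$, and it suffices to show that each $|\ell_j(x)|$ is at most $\tilde C_k / (k+1)$, or more crudely that the Lebesgue-type sum $\sum_j |\ell_j(x)|$ is at most $\tilde C_k = \bigl(\frac{2(1+\epsilon)}{1-\epsilon}\bigr)^k$.

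First I would set $d = x_1 - x_0 > 0$ and record the quasi-AP estimates: for any indices $i \ne j$ in $\{0,\ldots,k\}$ one has
\[
(1-\epsilon)\,|i-j|\,d \;\le\; |x_i - x_j| \;\le\; (1+\epsilon)\,|i-j|\,d,
\]
which follows by summing the defining inequalities of an $\epsilon$-quasi-AP over the consecutive gaps between $x_{\min(i,j)}$ and $x_{\max(i,j)}$. In particular the total length satisfies $x_k - x_0 \le (1+\epsilon) k\, d$. Next, fix $x \in [x_0, x_k]$ and fix $j$. For the numerator of $\ell_j(x)$ I would bound each factor $|x - x_i| \le x_k - x_0 \le (1+\epsilon) k\, d$, giving $\prod_{i \ne j} |x - x_i| \le \bigl((1+\epsilon) k\, d\bigr)^{k}$. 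For the denominator I use the lower bound $|x_j - x_i| \ge (1-\epsilon)|i-j| d$, so
\[
\prod_{i \ne j} |x_j - x_i| \;\ge\; \bigl((1-\epsilon) d\bigr)^{k} \prod_{i \ne j} |i - j|
\;=\; \bigl((1-\epsilon) d\bigr)^{k}\, j!\,(k-j)!.
\]
Combining, $|\ell_j(x)| \le \bigl(\frac{(1+\epsilon)k}{1-\epsilon}\bigr)^{k} \cdot \frac{1}{j!(k-j)!} = \bigl(\frac{1+\epsilon}{1-\epsilon}\bigr)^k \cdot \frac{k^k}{k!} \binom{k}{j}$. Summing over $j$ and using $\sum_j \binom{k}{j} = 2^k$ together with the crude bound $k^k/k! \le e^k$... actually this overshoots $\tilde C_k$, so the cleaner route is to not pull the full diameter out of every numerator factor.

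The better estimate, which I expect to be the crux, is to keep the numerator factors paired with the denominator factors index by index. Write
\[
|\ell_j(x)| = \prod_{i \ne j} \frac{|x - x_i|}{|x_j - x_i|}.
\]
Since $x, x_i \in [x_0, x_k]$, and since consecutive $x$'s differ by at least $(1-\epsilon)d$, there is at most one index $i_0 \ne j$ with $|x - x_{i_0}| < (1-\epsilon)d$... hmm, this case analysis around the nearest node is what makes the classical Lebesgue constant bound work. A clean way: note $|x - x_i| \le (1+\epsilon)k d$ for \emph{every} factor and $|x_j - x_i| \ge (1-\epsilon)d$ for \emph{every} factor — but to get the stated $2^k$ rather than $k^k$, group as follows. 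Order the nodes other than $x_j$ as those to the left and those to the right of $x$; on each side the distances $|x - x_i|$ are bounded by the successive partial sums which telescope, while the $|x_j - x_i|$ grow at least linearly. Rather than belabor the bookkeeping, here is the plan in its final form: I will show $|\ell_j(x)| \le \bigl(\frac{2(1+\epsilon)}{1-\epsilon}\bigr)^k$ for each single $j$ — not the sum — which already gives $|P(x)| \le (k+1) C \tilde C_k$; but since the statement asserts the cleaner $C\tilde C_k$, I instead bound $\max_j |\ell_j(x)|$ by $\tilde C_k/(k+1)$. Concretely, choose the index $m$ with $x \in [x_{m}, x_{m+1}]$ (or an endpoint). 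For each $i$, pair $|x - x_i|$ with a nearby node-gap: one checks $|x - x_i| \le (|i - m| + 1)(1+\epsilon) d$, and in the product for $\ell_j$ the denominators contribute $\prod_{i\ne j}(1-\epsilon)d\,|i-j|$. A careful matching of the integers $\{|i-m|+1 : i \ne j\}$ against $\{|i - j| : i \ne j\}$ shows their ratio is at most $2^k$ (each integer in the numerator multiset is at most twice the corresponding one in the sorted denominator multiset, since $|i-m|+1 \le k+1 \le 2|i-j|$ would need justification — the real argument uses that the sorted numerator sequence is dominated termwise by twice the sorted denominator sequence, $\{1,1,2,2,\ldots\}$ vs $\{1,2,3,\ldots\}$). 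This termwise domination of $\{1,1,2,2,3,3,\dots\}$ by $2\cdot\{1,2,3,\dots\}$ is elementary, and it delivers exactly the factor $2^k$, hence the stated constant $\tilde C_k = \bigl(\frac{2(1+\epsilon)}{1-\epsilon}\bigr)^k$. The main obstacle is precisely this combinatorial matching of distance-multisets; everything else is the routine quasi-AP geometry recorded above.
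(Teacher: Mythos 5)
Your starting point is the same as the paper's (write $P$ in the Lagrange basis $\ell_j(x)=\prod_{i\ne j}\frac{x-x_i}{x_j-x_i}$ and use the quasi-AP spacing $(1-\epsilon)|i-j|d \le |x_i-x_j| \le (1+\epsilon)|i-j|d$), and you correctly notice that the crude bound $|x-x_i|\le(1+\epsilon)kd$ for every factor overshoots by a factor $k^k/k!\sim e^k$. But your fallback does not close. You aim for the per-term bound $\max_j|\ell_j(x)| \le \tilde C_k/(k+1)$, which is strictly stronger than what is needed, and the multiset matching you sketch doesn't reach it: the denominator multiset $\{|i-j|:i\ne j\}$ is $\{1,\dots,j\}\cup\{1,\dots,k-j\}$, which sorted is itself of the shape $\{1,1,2,2,\dots\}$ once $j$ is interior, not the $\{1,2,3,\dots\}$ you compare against; the claimed termwise domination is not verified; and even granting the factor $2^k$ it only gives $|\ell_j|\le\tilde C_k$ per term, hence $|P(x)|\le (k+1)C\tilde C_k$, not the extra $1/(k+1)$ you declared you would produce. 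As written the argument has a genuine gap.

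The paper avoids all of this by not asking for a per-term saving of $1/(k+1)$: it bounds the numerator by something \emph{uniform} in $j$ and lets the sum over $j$ supply the $2^k$ through the binomial theorem. Concretely, pick $m$ with $x_m\le x\le x_{m+1}$. For $i\le m$ one has $|x-x_i|\le x_{m+1}-x_i\le (m+1-i)(1+\epsilon)d$, and for $i\ge m+1$ one has $|x-x_i|\le x_i-x_m\le (i-m)(1+\epsilon)d$; the product of these integer weights over \emph{all} $i$ is $(m+1)!\,(k-m)!$, which is $\le k!$ because $\binom{k+1}{m+1}\ge k+1$. Dropping the factor for $i=j$ only helps, so $\prod_{i\ne j}|x-x_i|\le (1+\epsilon)^k d^k\,k!$ for every $j$. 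Combined with $\prod_{i\ne j}|x_j-x_i|\ge(1-\epsilon)^k d^k\,j!\,(k-j)!$ this gives $|\ell_j(x)|\le\bigl(\tfrac{1+\epsilon}{1-\epsilon}\bigr)^k\binom{k}{j}$, and summing over $j$ yields exactly $\bigl(\tfrac{2(1+\epsilon)}{1-\epsilon}\bigr)^k=\tilde C_k$. (The paper states the numerator bound $(1+\epsilon)^k d^k k!$ without spelling out the $m$-argument; that is the one step you need to supply, and it is where your proposal stalls.)
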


\begin{proof}
Let $d=x_1-x_0>0$. By Lagrange interpolation, we have 
$$P(x)=\sum_{j=0}^k P(x_j) \frac{(x-x_0)\cdots \widehat{(x-x_j)}\cdots (x-x_k)}{(x_j-x_0)(x_j-x_1)\cdots (x_j-x_k)}.$$ 
In each term we have $|P(x_j)|\le C$, the absolute value of the numerator is at most $(1+\epsilon)^kd^kk!$ and the absolute value of the denominator of the $j$-th term is at least $(1-\epsilon)^kd^kj!(k-j)!$, so the absolute value of $P(x)$ is at most 
$$\left(\frac{1+\epsilon}{1-\epsilon}\right)^k\sum_{j=0}^k{k\choose j}=\left(\frac{2(1+\epsilon)}{1-\epsilon}\right)^k,$$
{as desired.}
\end{proof}

\bigbreak

\begin{lemma}
\label{lemma:tildehj}
Let $k \ge 2$, $f_0(z) = z$.
Then,
for all $h_0 \in C^{k+1}([-\epsilon,+\epsilon];\RR)$,
for all $f, h \in C^k([-\epsilon,+\epsilon];\RR)$ and for all $\eta > 0$,
if 
\[
\begin{gathered}
\|h_0\|_{C^{k+1}} \le B_{k+1}, \\
0 < \eta < \min\left\{ \frac{\epsilon}{4k}, \frac1{(k+1)2^{k+3}} \right\},
\quad
\|h-h_0\|_{C^k} < \eta, \quad
\|f-f_0\|_{C^k} < \eta
\end{gathered}
\]
then the following property holds:
for every integer $j$ with $-2k\le j\le 2k$,
if $\tilde h_{j}:=h\circ f^j$, then 
the natural domain of $\tilde h_{j}$ contains
$[-\epsilon/2,\epsilon/2]$ and we have

$$\|\tilde h_{j}-h_0\|_{C^k([-\epsilon/2,\epsilon/2];\RR)}
<((2^{k+3}(k+3)-2k)B_{k+1}+4)\eta.$$
\end{lemma}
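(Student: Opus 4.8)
The plan is to estimate $\|\tilde h_j - h_0\|_{C^k} = \|h\circ f^j - h_0\|_{C^k}$ by inserting the intermediate function $h_0\circ f^j$ and using the triangle inequality:
\[
\|h\circ f^j - h_0\|_{C^k} \le \|h\circ f^j - h_0\circ f^j\|_{C^k} + \|h_0\circ f^j - h_0\|_{C^k}.
\]
For the first term I would use that $\|f-f_0\|_{C^1}<\eta$ gives $1-\eta<(f^j)'<1+\eta$ wherever $j>0$ (and the analogous two-sided bound for $j<0$, using that $f$ is close to the identity and hence invertible), so that all derivatives of $f^j$ up to order $k$ are controlled (the first derivative is within $\eta$ of $1$, the higher ones are $O(\eta)$), and the Faà di Bruno formula bounds $\|(h-h_0)\circ f^j\|_{C^k}$ by $\|h-h_0\|_{C^k}$ times a constant depending only on $\|f^j\|_{C^k}$, which in turn is bounded in terms of $k$ and $\eta<1$. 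Tracking the combinatorial constant through Faà di Bruno on an iterate is where the $2^{k+3}$-type factors come from; I would be slightly careful but not exhaustive here, since the lemma only asserts \emph{some} explicit bound of this shape.

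For the second term, $\|h_0\circ f^j - h_0\|_{C^k}$, the idea is that $h_0$ is fixed with $\|h_0\|_{C^{k+1}}\le B_{k+1}$, and $f^j$ is $C^k$-close to the identity $f_0$. Writing $h_0\circ f^j - h_0\circ f_0$ and applying the mean value theorem in the ``$f$-slot'' costs one derivative of $h_0$ — hence the need for the $C^{k+1}$ bound $B_{k+1}$ rather than merely $C^k$ — and produces a factor $\|f^j - f_0\|_{C^k}$, which is $O(k\eta)$ since each of the $|j|\le 2k$ compositions moves $f_0$ by at most $O(\eta)$ in $C^k$ norm (again via Faà di Bruno / a telescoping estimate $\|f^{m+1}-f^m\|_{C^k} = \|(f-f_0)\circ f^m\|_{C^k}$). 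Combining, the second term is at most a constant times $B_{k+1}\,k\,\eta$, and summing the two contributions and absorbing everything into the stated coefficient $(2^{k+3}(k+3)-2k)B_{k+1}+4$ finishes the proof. The constant $4$ presumably collects the first-term contribution (where $\|h-h_0\|_{C^k}<\eta$ directly, with a Faà di Bruno constant close to $1$ because $f^j$ is so close to the identity under the smallness hypothesis $\eta<1/((k+1)2^{k+3})$).

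The main obstacle is bookkeeping: getting a genuinely \emph{explicit} constant of the claimed form requires carefully propagating the Faà di Bruno combinatorial factors through $|j|\le 2k$ iterations while using the smallness of $\eta$ to keep each step's multiplier close to $1$, and confirming that the product of all these multipliers stays below the modest explicit bound advertised. There is no conceptual difficulty — it is the standard ``composition of near-identity maps is near-identity in $C^k$'' estimate — but the precise constant chasing is the part that needs attention; the hypothesis $\eta < 1/((k+1)2^{k+3})$ is exactly what makes the geometric-type product of per-step factors converge to something controlled.
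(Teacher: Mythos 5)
Your plan is sound and proves the lemma (possibly with a somewhat different explicit constant, which is harmless since downstream only the shape of the bound is used). The decomposition you propose, $\tilde h_j - h_0 = (h - h_0)\circ f^j + (h_0\circ f^j - h_0)$, differs from the paper's organization: the paper applies Fa\`a di Bruno directly to $(h\circ f^j)^{(r)}$, isolates the leading term $h^{(r)}(f^j(x))((f^j)'(x))^r$ and the single-block term $h'(f^j(x))(f^j)^{(r)}(x)$, proves a uniform bound on the sum $S$ of all remaining ``intermediate'' terms (namely $|S| < 2M(2+\tilde\eta)^r\tilde\eta$ whenever $G$ is $\tilde\eta$-close to the identity in $C^r$ and $M$ bounds the intermediate derivatives of $F$), and then compares the leading term with $h_0^{(r)}(x)$ by inserting $h_0^{(r)}(f^j(x))$ — which is exactly where the $C^{k+1}$ norm of $h_0$ enters, as you identified. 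In the end both routes produce the same set of elementary differences to control, so your top-level split is just a cleaner repackaging of what the paper does inside the leading term. Two things your sketch glosses over that the paper treats carefully and you would also need: (i) the negative iterates, handled by a separate Fa\`a di Bruno argument differentiating $F\circ G = \mathrm{id}$ with $F = f^{-1}$, $G = f$, to get $\|f^{-1}-f_0\|_{C^k} < 2\eta$ — ``close to identity and hence invertible'' gives the $C^0$ and $C^1$ bounds cheaply but not the higher-derivative ones; and (ii) the bound $\|f^j - f_0\|_{C^k} = O(j\eta)$ itself needs the intermediate-term estimate, whether you do it by the paper's induction on $j$ or by your telescope $\|f^{m+1}-f^m\|_{C^k} = \|(f-f_0)\circ f^m\|_{C^k}$. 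Neither is a gap in the idea, only in the level of detail.
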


Before we go into the proof,
we present some of the main tools.

\begin{remark}
\label{remark:faa}
In the proof of Lemma~\ref{lemma:tildehj},
we will use several times the combinatorial version 
of the Fa\`a di Bruno's formula:
\begin{equation}
\label{eq:Faa}
(F\circ G)^{(r)}(x)=
\sum_{\pi\in \Pi}F^{(|\pi|)}(G(x))\prod_{B\in \pi}G^{(|B|)}(x),
\end{equation}
where $\pi$ runs through the set $\Pi$ of all partitions
of the set $\{1, 2,\dots, r\}$
{($|\pi|$ is the number of sets in the partition $\pi \in \Pi$
and $|B|$ is the cardinality of $B \in \pi$)}.
\end{remark}

\begin{remark}
\label{remark:stirling2k}
Let  ${n\brace k}$ be the \textit{Stirling number of the second kind}:
the number of partitions
of a set with $n$ elements in $k$ nonempty subsets.
Recall that ${n\brace k} \le  {n\choose k}k^{n-k}$:
we may choose the smallest elements of each of the $k$ subsets,
and the other elements are distributed by some function
sending the remaining $n-k$ elements to the $k$ subsets. 
\end{remark}

\begin{proof}[Proof~of~Lemma~\ref{lemma:tildehj}]
The estimate $\|f-f_0\|_{C^k} < \epsilon/(4k)$
implies $x -|j| \epsilon/(4k) < f^{j}(x) < x+|j|\epsilon/(4k)$
for all integer $j$, $0 < |j| \le 2k$.
Thus, the functions $f^j$ and $\tilde h_j$
are well defined in $[-\epsilon/2,\epsilon/2]$.

In the cases below where we will apply Remark~\ref{remark:faa},
the map $G$ will always be $C^r$-close to the identity, i.e.,
it satisfies $\|G-f_0\|_{C^r} < \tilde\eta$,
for some small $\tilde\eta$.
For a given partition $\pi$,
let $s = |\pi|$ be the number of nonempty sets in the partition
and $m \le s$ be the number of singletons in $\pi$.
In order to estimate the sum in Equation~\eqref{eq:Faa},
we isolate the terms
$F^{(r)}(G(x))(G'(x))^r$
(corresponding to $s=r$, the partition into $r$ singletons)
and $F'(G(x))G^{(r)}(x)$
(corresponding to $s=1$, the partition into a single set).
We proceed to obtain estimates for the \textit{intermediate terms},
for which $1 < s < r$ and $t = s-m > 0$ subsets
in the partition $\pi$ which are not singletons.

Let $M=\max_{2\le \ell<r}\|F^{(\ell)}\|_{C^0}\le \|F\|_{C^r}$.
We consider separately the cases $t = 1$ and $t \ge 2$.
In the case $t = s-m=1$,
we have ${r\choose m}$ ways of choosing the singletons,
and the remaining $r-m$ elements form the remaining subset
(we should have $m<r-1$).
The corresponding sum can be bounded in absolute value by
\[ \sum_{m<r-1}{r\choose m}M(1+\tilde\eta)^m\tilde\eta
< M(2+\tilde\eta)^r\tilde\eta. \]
In the cases with $t = s-m\ge 2$,
the sum can be estimated using the inequality
${n\brace k} \le  {n\choose k}k^{n-k}$.
The number of these terms for fixed values of $m$ and $s$
is less than or equal to
$${r\choose m}{r-m \brace t}\le
{r\choose m}{r-m\choose t}t^{r-m-t}={r\choose t}{r-t\choose m}t^{r-t-m}.$$
The sum of the corresponding terms for a fixed value of
$t \ge 2$ can be bounded by 
\begin{align*}
\sum_{m<r-t}{r\choose t}{r-t\choose m} t^{r-t-m}
M (1+\tilde\eta)^m {\tilde\eta}^t
&=
M {r\choose t} t^{r-t} {\tilde\eta}^t
\sum_{m<r-t}
{r-t\choose m}
\left(\frac{1+\tilde\eta}{t}\right)^m \\
&<
M{r\choose t}\left(1+\frac{1+\tilde\eta}t\right)^{r-t}t^{r-t}{\tilde\eta}^t.
\end{align*}

Let $w(t)=t^r{\tilde\eta}^t$.
Notice that, provided $\tilde\eta\le e^{-r/2}$,
the map $w$ is decreasing in $t$ for $2\le t\le r$.
Indeed, the derivative
of $\log w(t)$ is $r/t+\log(\tilde\eta)\le r/2-r/2=0$.
In particular, $t^r{\tilde\eta}^t\le 2^r{\tilde\eta}^2$ for $t\in [2,r]$.
Since $t^{r-t}\le 2^{-t} t^r$ and
$1+\frac{1+\tilde\eta}t\le 1+\frac{1+\tilde\eta}2$ for $t\ge 2$,
the above sum can be bounded in absolute value by 
\begin{align*}
M\sum_{t\le r}{r\choose t}
\left(1+\frac{1+\tilde\eta}2\right)^{r-t}
\left(\frac12\right)^{t} 2^{r}{\tilde\eta}^2
&=M 2^{r}{\tilde\eta}^2\left(1+\frac{1+\tilde\eta}2+\frac12\right)^r \\
&=M(4+\tilde\eta)^r\tilde\eta^2.
\end{align*}

Therefore, assuming $\tilde\eta\le 2^{-r}<e^{-r/2}$,
the sum of the absolute values of the intermediate terms
is at most
\begin{equation}
\label{eq:intermediate}
M(2+\tilde\eta)^r\tilde\eta+M(4+\tilde\eta)^r\tilde\eta^2
<2M(2+\tilde\eta)^r\tilde\eta.
\end{equation}

Let us prove now by induction on $j$ that 
\begin{equation}
\label{eq:inductionj}
\|f-f_0\|_{C^k} < \eta 
\implies \|f^j-f_0\|_{C^k} < 2j\eta, \qquad
0 < j\le 2k;
\end{equation}
the case $j=1$ is trivial.
We first consider the $C^0$ and $C^1$ estimates.
We have
\[ |f^m(x)-x|=|\sum_{j=0}^{m-1}(f(f^j(x))-f^j(x))|\le
\sum_{j=0}^{m-1}|f(f^j(x))-f^j(x)|<m\eta\]
and, assuming $0<\eta<\frac1{2k(2k+1)}$
(and using Remark~\ref{remark:1+a})
we always have,
for $0\le m\le 2k$,
\[ (f^m)'(x)\in [(1-\eta)^m,(1+\eta)^m]\subset
(1-(m+1)\eta,1+(m+1)\eta). \]
In order to make the recursive estimate for $\|f^j-f_0\|_{C^k}$, we may use the previous estimate with $1<r\le k$, $F=f^j$, for some $0\le j<2k$ and $G=f$, and the fact that $M\le 2j\eta$ in this case.
Provided $\eta\le 2^{-k}$, we have
$$|(f^{j+1})^{(r)}(x)|=|(F\circ G)^{(r)}(x)|\le 2j\eta(1+\eta)^r+\eta(1+\eta)^j+4j(2+\eta)^r\eta^2.$$

So, assuming $\eta\le\frac1{(k+1)2^{k+3}}<\frac1{2k(2k-1)}$, since we have $(1+\eta)^r\le (1+\eta)^k<1+(k+1)\eta$, $(2+\eta)^r\le (2+\eta)^k=2^k(1+\eta/2)^k<2^k(1+(k+1)\eta/2)$ and $(1+\eta)^j\le (1+\eta)^{2k-1}<1+2k\eta$, we get
\[ \begin{gathered}
2j\eta(1+\eta)^r+\eta(1+\eta)^j+4j(2+\eta)^r\eta^2< \\
< 2j\eta+2j(k+1)\eta^2+\eta+2k\eta^2+4(2k-1)2^k(1+(k+1)\eta/2)\eta^2
\end{gathered} \]
and, since $(2k-1)(1+(k+1)\eta/2)<(2k-1)(1+\frac{k+1}{2^{k+3}})<2k-1+\frac{k(2k+1)}{2^{k+3}}<2k-1/2$,
this is at most
\[ \begin{gathered}
(2j+1)\eta+(2(2k-1)(k+1)+2k+2^{k+1}(4k-1))\eta^2<\\
< (2j+1)\eta+2^{k+3}(k+1)\eta^2\le 2(j+1)\eta, 
\end{gathered} \]
concluding the proof by induction of the claim in
Equation~\eqref{eq:inductionj}.

Now we will prove that if $0<\eta<\frac1{(k+1)2^{k+3}}$
and $\|f-f_0\|_{C^k} < \eta$ then
\[ \|f^{-1}-f_0\|_{C^k} <
\eta+\frac{2^{k+2}(k+1)}{k}\eta^2\le \left(1+\frac1{2k}\right)\eta<2\eta. \]
Indeed, taking $F=f^{-1}$ and $G=f$, we have $F(G(x))=x$,
so $|F(y)-y|=|G(F(y))-F(y)|<\eta$ and
$F'(G(x))=1/G'(x)\in (\frac1{1+\eta},\frac1{1-\eta})$,
so $|F'(G(x))-1|<\frac{\eta}{1-\eta}=\eta+\frac{\eta^2}{1-\eta}<
% \eta+\frac{2^{k+2}(k+1)}{k}\eta^2\le
(1+\frac1{2k})\eta$ (thus taking care of the $C^0$ and $C^1$ estimates).

The $C^r$ estimates for $r \ge 2$ are proved by induction on $r$.
We have
$0=(F\circ G)^{(r)}(x)=F^{(r)}(G(x))(G'(x))^r+F'(G(x))G^{(r)}(x)+S$,
where $S$ is the sum of the intermediate terms in Fa\`a di Bruno's formula,
so \[ F^{(r)}(G(x))=-(G'(x))^{-r}(F'(G(x))G^{(r)}(x)+S). \]
We have $|G^{(r)}(x)|<\eta$ and
$|(G'(x))^{-r}F'(G(x))|<(1-\eta)^{-(r+1)}<1+(r+2)\eta$.
Also, by induction hypothesis,
$|F^{(s)}(y)|<2\eta$ for $2\le s<r$.
We then have
(by Equation~\eqref{eq:intermediate},
using the estimates $M < 2\eta$ and $\tilde\eta = \eta$)
\[ |S|<4\eta(2+\eta)^r\eta=4(2+\eta)^r\eta^2=2^{r+2}(1+\eta/2)^r\eta^2. \]
We therefore have
{%
\[ |(G'(x))^{-r}S|
<2^{r+2}(1+\eta/2)^r(1-\eta)^{-r}\eta^2<2^{r+2}(1+(2r+2)\eta)\eta^2 \]}
and
thus 
\[ |F^{(r)}(G(x))|\le (1+(r+2)\eta)\eta+2^{r+2}(1+(2r+2)\eta)\eta^2<\eta+\frac{2^{k+2}(k+1)}{k}\eta^2, \] which implies our claim.

It follows that, for $-2k\le j\le 2k$, $|f^j(x)-x|<2k\eta$, $(1-\eta)^{2k}<(f^j)'(x)<(1-\eta)^{-2k}$ and $\|f^j-f_0\|_{C^k} < (4k+2)\eta$. 

Finally, let us take $j$ with $-2k\le j\le 2k$, $F=h$ and $G=f^j$.
We first do the $C^0$ estimate:
\[
\begin{gathered}
|h(f^j(x)) - h_0(x)| \le
|h(f^j(x)) - h_0(f^j(x))| + |h_0(f^j(x)) - h_0(x)|  < \\
<
\eta + \|h_0\|_{C^1} |f^j(x) - x| \le (1 + |j| \|h_0\|_{C^1}) \eta 
\le (1 + 2kB_{k+1}) \eta.
\end{gathered}
\]
Next the $C^1$ estimate:
we have
\[
\begin{gathered}
|(h\circ f^j)'(x)-h_0'(x)|=|h'(f^j(x))\cdot (f^j)'(x)-h_0'(x)| \le \\
\le |h'(f^j(x))\cdot ((f^j)'(x)-1)| + |h'(f^j(x))-h_0'(f^j(x))|
+|h_0'(f^j(x))-h_0'(x)|,
\end{gathered}
\]
which is at most
\[
\begin{gathered}
(4k+2)\eta\|h\|_{C^1}+\|h-h_0\|_{C^1}+2k\eta\|h_0\|_{C^2}< \\
<(4k+2)\eta(\|h_0\|_{C^2}+\eta)+\eta+2k\eta\|h_0\|_{C^2}= \\
= (6k+2)\eta\|h_0\|_{C^2}+\eta+(4k+2)\eta^2
<( (6k+2) B_{k+1} +2 ) \eta.
\end{gathered} \]
For $r\ge 2$, we have
$(F\circ G)^{(r)}(x)=F^{(r)}(G(x))(G'(x))^r+F'(G(x))G^{(r)}(x)+S$, where $S$ is the sum of the intermediate terms in Fa\`a di Bruno's formula. We thus have
\[ |(F\circ G)^{(r)}(x)-h_0^{(r)}(x)|\le |F^{(r)}(G(x))(G'(x))^r-h_0^{(r)}(x)|+|F'(G(x))G^{(r)}(x)|+|S|. \]

We have
\[
\begin{gathered}
|F^{(r)}(G(x))(G'(x))^r-h_0^{(r)}(x)| \\
\le |F^{(r)}(G(x))((G'(x))^r-1)|+|F^{(r)}(G(x))-h_0^{(r)}(G(x))|+|h_0^{(r)}(G(x))-h_0^{(r)}(x)| \\
\le (k+1)(4k+2)\eta\|h\|_{C^r}+\eta+2k\eta\|h_0\|_{C^{r+1}} \\
< (k+1)(4k+2)\eta(\|h_0\|_{C^r}+\eta)+2k\eta\|h_0\|_{C^{r+1}}+\eta \\
< (4k^2+8k+2)\eta\|h_0\|_{C^{r+1}}+\eta+(4k^2+6k+2)\eta^2  \\
< (4k^2+8k+2)\eta\|h_0\|_{C^{r+1}}+2\eta.
\end{gathered} \]

We also have 
\[
\begin{gathered}
|F'(G(x))G^{(r)}(x)|<(4k+2)\eta\|h\|_{C^1}<(4k+2)\eta(\|h_0\|_{C^1}+\eta)= \\
= (4k+2)\eta\|h_0\|_{C^1}+(4k+2)\eta^2<(4k+2)\eta\|h_0\|_{C^1}+\eta, 
\end{gathered} \]
and
\[
\begin{gathered}
|S|\le 2\|h\|_{C^r}(2+(4k+2)\eta)^r(4k+2)\eta= \\
= 2^{r+2}(2k+1)(1+(2k+1)\eta)^r\eta\|h\|_{C^r} < \\
<2^{k+2}(2k+1)(1+(k+1)(2k+1)\eta)\eta\|h\|_{C^r}< \\
< 2^{k+3}(k+1)\eta\|h\|_{C^r}<2^{k+3}(k+1)\eta(\|h_0\|_{C^r}+\eta)<2^{k+3}(k+1)\eta\|h_0\|_{C^r}+\eta.
\end{gathered} \]
Therefore, we have
\[
\begin{gathered}
|(F\circ G)^{(r)}(x)-h_0^{(r)}(x)|\le  \\
\le (2^{k+3}(k+1)+4k^2+12k+4)\eta\|h_0\|_{C^{r+1}}+4\eta< \\ 
< ((2^{k+3}(k+3)-2k) B_{k+1} +4)\eta,
\end{gathered} \]
which concludes the proof of the lemma.
\end{proof}

\bigbreak

We are ready to prove Proposition~\ref{prop:fh}.
Recall that this proposition shows that the divided difference
$[z_0,z_1,\ldots,z_k;h(\tilde z_0),h(\tilde z_1),\ldots,h(\tilde z_k)]$
is well approximated by $h_0^{(k)}(0)/k!$.

% \begin{prop}
% \label{lemma:fh}
% Let $k \ge 2$, $\epsilon > 0$,
% $0 < B_k \le B_{k+1}$ and $f_0(z) = z$.
% Then there exists $C_k > 1$
% (depending only on $k$, $\epsilon$, $B_k$ and $B_{k+1}$)
% such that,
% for all $h_0 \in C^{\infty}([-\epsilon,+\epsilon];\RR)$,
% for all $f, h \in C^k([-\epsilon,+\epsilon];\RR)$ and for all $\eta > 0$,
% if $B = h_0^{(k)}(0)$ and
% \[
% \begin{gathered}
% \|h_0\|_{C^{k+1}} \le B_{k+1}, \qquad
% |B| \ge B_k, \\
% 0 < \eta < \frac{1}{2C_k}, \quad
% \|h-h_0\|_{C^k} < \eta, \quad
% \|f-f_0\|_{C^k} < \eta, \quad f(0) > 0
% \end{gathered}
% \]
% then the following property holds.

% Set $z_j = f^j(0)$;
% for $\tilde z_0 \in (z_{-2k},z_{2k})$,
% set $\tilde z_j = f^j(\tilde z_0)$.
% We then have
% \[ 0 < \frac{B-C_k{\eta}}{k!}  <
% [z_0,z_1,\ldots,z_k;h(\tilde z_0),h(\tilde z_1),\ldots,h(\tilde z_k)]
% < \frac{B+C_k{\eta}}{k!}. \]
% \end{prop}

\bigbreak

\begin{proof}[Proof of Proposition \ref{prop:fh}]
Consider $k$, $f$ and $h$ fixed and write $\delta = f(0) > 0$;
let $\tilde h_{j}:=h\circ f^j$ for $0\le j\le k$.
The value of $C_k$ will only be defined at the end of the proof.
We can already assume that 
\[ C_k \ge \max\left\{ \frac{4k}{\epsilon}, (k+1)2^{k+2} \right\} \]
so that 
\[ \eta < \min\left\{ \frac{\epsilon}{8k}, \frac{1}{(k+1)2^{k+3}} \right\}, \]
in particular satisfying the conditions 
of Lemma~\ref{lemma:tildehj}.

Define the function $g$ by:
\begin{align*}
g(\tilde z_0) &= 
[z_0,z_1,\ldots,z_k;h(\tilde z_0),h(\tilde z_1),\ldots,h(\tilde z_k)] \\
&=
[z_0,z_1,\ldots,z_k;
\tilde h_0(\tilde z_0),\tilde h_1(\tilde z_0),\ldots,\tilde h_k(\tilde z_0)];
\end{align*}
notice that $\tilde z_j$ is taken to be a function of $\tilde z_0$.
For $\tilde z_0 = z_j$ (where $j$ is an integer, $-2k \le j \le 2k$),
we have (from Equation~\eqref{equation:zeta})
\[ g(z_j) = 
[z_0,z_1,\ldots,z_k;
\tilde h_j(z_0),\tilde h_j(z_1),\ldots,\tilde h_j(z_k)] = 
\frac{1}{k!} \tilde h_j^{(k)}(\zeta) \]
for some $\zeta \in [z_0,z_k]$.
Notice that if $\zeta \in [z_{-2k},z_{2k}]$ (and $B = h_0^{(k)}(0)$) then
\[ \begin{aligned}
|\tilde h_j^{(k)}(\zeta)-B| &\le
|\tilde h_j^{(k)}(\zeta)-h_0^{(k)}(\zeta)|+|h_0^{(k)}(\zeta)-h_0^{(k)}(0)| \\
&<((2^{k+3}(k+3)-2k)\|h_0\|_{C^{k+1}}+4)\eta+\|h_0\|_{C^{k+1}}\cdot 2k\eta \\
&=c_1\eta, 
\end{aligned} \]
with $c_1:=2^{k+3}(k+3)\|h_0\|_{C^{k+1}}+4$
(in the second line we use Lemma~\ref{lemma:tildehj}).

We then have
\begin{equation}
\label{equation:gzj}
\left|g(z_j) - \frac{B}{k!}\right|=
\frac{1}{k!}\left|\tilde h_j^{(k)}(\zeta)-B\right|
\le \frac{1}{k!}c_1\eta=c_2\eta
\end{equation}
(for $c_2:=\frac{c_1}{k!} > 0$, which depends only on $\|h_0\|_{C^{r+1}}$ and $k$).
Our aim is to prove a similar estimate
for all $\tilde z_0 \in [z_{-2k},z_{2k}]$.

Let $\tilde H_j$ be the following polynomial of degree $k$
(in the variable $z$ or $\tilde z_0$):
\[ \tilde H_j(z) = \tilde h_j(0) + \tilde h_j'(0) z + \cdots +
\tilde h_j^{(i)}(0) \frac{z^i}{i!} + \cdots +
\tilde h_{j}^{(k-1)}(0) \frac{z^{k-1}}{(k-1)!} + \frac{B}{k!}z^k. \]
% We also have $\|\tilde H_j - h_0\|_{C^k} < c_3\eta$
% (for some $c_3 > 0$).
% But we can give a better estimate.
For $z \in [z_{-2k},z_{2k}]$,
write the Taylor expansion of $\tilde h_j(z)$ around $z = 0$
with Lagrange remainder:
\[
\begin{aligned}
\tilde h_j(z) &= \tilde h_j(0) + \tilde h_j'(0) z + \cdots +
\tilde h_j^{(i)}(0) \frac{z^i}{i!} + \cdots +
\tilde h_j^{(k)}(\zeta) \frac{z^k}{k!} \\
&= \tilde H_j(z) + (\tilde h_j^{(k)}(\zeta) - B) \frac{z^k}{k!}
\end{aligned}
\]
for some $\zeta \in [z_{-2k},z_{2k}]$.
Since we know that $|\tilde h_j^{(k)}(\zeta) - B| < c_1 \eta$
we have
\begin{equation}
\label{equation:Hh}
|\tilde H_j(z) - \tilde h_j(z)|
< \frac{c_1}{k!}|z|^k \eta
= c_2|z|^k \eta
\le c_2 \eta (2k(1+4k\eta)\delta)^k<c_3\eta\delta^k
\end{equation}
for all $z \in [z_{-2k},z_{2k}]$,
where $c_3=2^{k+1}k^k\cdot c_2$
depends only on $k$ and $\|h_0\|_{C^{k+1}}$, not on $\eta$ or $\delta$.
{The last inequality follows from $(1+4k\eta)^k < 2$
(which is true because $\eta < \frac{1}{(k+1)2^{k+3}}$).}

%(aqui, al\'em do $c_1$, $c_4$ envolve um fator da ordem de $(2(1+\epsilon)k)^k$)

Define the polynomial $G$ (also in the variable $z$ or $\tilde z_0$) by:
\begin{equation}
\label{equation:Gu}
\begin{aligned}
G(z) &=
[z_0,z_1,\ldots,z_k;\tilde H_0(z),\tilde H_1(z),\ldots,\tilde H_k(z)] \\
&= u_0 \tilde H_0(z) + u_1 \tilde H_1(z) + \cdots + u_k \tilde H_k(z)
\end{aligned}
\end{equation}
where, as in Equation~\eqref{equation:expand},
\[ u_j = \prod_{i \ne j} \frac{1}{z_i - z_j}. \]
Similarly, we have
\begin{equation}
\label{equation:gu}
g(z) = u_0 \tilde h_0(z) + u_1 \tilde h_1(z) + \cdots + u_k \tilde h_k(z).
\end{equation}
Since the sequence $(z_j)_{0\le j\le k}$ is a $k\eta$-quasi-AP,
for $i \ne j$ we have
\[ (1-k\eta)|i-j|\delta<|z_i-z_j|<(1+k\eta)|i-j|\delta \]
and therefore
$$|u_j| =
\prod_{i \ne j} \frac{1}{|z_i - z_j|}
\le \frac{(1-k\eta)^{-k}\delta^{-k}}{j!(k-j)!}
=\frac{(1-k\eta)^{-k}}{k!}{k\choose j}\delta^{-k}
<\frac{2}{k!}{k\choose j}\delta^{-k}.$$

Equations~\eqref{equation:Hh}, \eqref{equation:Gu} and \eqref{equation:gu}
then imply
\[ |G(z) - g(z)|
<\frac{2c_4\delta^k}{k!}\delta^{-k}\eta\sum_{j=0}^k {k\choose j}= c_4 \eta \]
for all $z \in [z_{-2k},z_{2k}]$,
where $c_4=2^{k+1}c_3/k!=4^{k+1}k^k\cdot c_2/k!$.
In particular, \eqref{equation:gzj} implies
$|G(z_j) - {B}/{k!}| < (c_2+c_4) \eta$ for all integer $j$,
$-2k \le j \le 2k$.
Apply Lemma~\ref{lemma:interp} with $x_j = z_{4j-2k}$
(an increasing $4k\eta$-quasi-AP), $P = G-{B}/{k!}$
(a~polynomial of degree at most $k$) and 
$C = (c_2+c_4) \eta$ to conclude that $|G(z) - {B}/{k!}| < c_5 \eta$ 
for all $z \in [z_{-2k},z_{2k}]$.
% (Lemma~\ref{lemma:interp} gives us a valid value of $c_5 > 0$:
Since $(2(1+4k\eta)/(1-4k\eta))^k<3\cdot 2^{k-1}$,
we may take $c_5=3\cdot 2^{k-1}(c_2+c_4)$.

We therefore have that
(for all $z \in [z_{-2k},z_{2k}]$):
\[ \begin{gathered}
|g(z) - \frac{B}{k!}|
\le |G(z) - g(z)| + |G(z) - \frac{B}{k!}| 
< (c_4+c_5)\eta = \\
= ((3\cdot 2^{k-1}+1)c_4+3\cdot2^{k-1}c_2)\eta
=((3\cdot 2^{k-1}+1)4^{k+1}k^k\cdot
\frac{c_1}{k!^2}+3\cdot2^{k-1}\frac{c_1}{k!})\eta = \\
=\frac{c_1}{k!}((3\cdot 2^{k-1}+1)4^{k+1}\frac{k^k}{k!}+3\cdot2^{k-1})\eta.
\end{gathered} \]
Since $k!>k^ke^{-k}$,
this is smaller than
\[ \begin{gathered}
\frac{c_1}{k!}((3\cdot 2^{k-1}+1)4^{k+1}e^k+3\cdot2^{k-1})\eta
<8^{k+1}e^k\frac{c_1}{k!}\eta = \\
= \frac{\eta}{k!}
(2^{k+3}(k+3)\|h_0\|_{C^{k+1}}+4)
8^{k+1}e^k 
\le 
\frac{\eta}{k!}
(2^{k+3}(k+3) B_{k+1} +4)
8^{k+1}e^k.
\end{gathered} \]
{Define $C_k$ by
\[ C_k = \max\left\{ \frac{4k}{\epsilon},
% (k+1)2^{k+2} 
(2^{k+3}(k+3) B_{k+1} +4)8^{k+1}e^k
\right\}; \]
the proof is complete.}
\end{proof}

%%%%%%%%%%%%%%%%%%%%%%%%%%%%%%%%%%%%%%%%%%%%%%%%%%%%%%%%%%%%%%%%%%%%%%%%%%%%%%%

\bigbreak
\section{Proof of Theorem~\ref{theo:ck}}
\label{section:proofck}

In this section we present the proof of Theorem~\ref{theo:ck},
starting with the easy implications.
Following \cite{stable}, we say that the base torus is a
\textit{$C^k$-T-unstable orbit}
(resp. \textit{$C^k$-HL-unstable orbit})
if condition 2 (resp. 3) in the statement of Theorem~\ref{theo:ck} holds.

\begin{remark}
\label{remark:THL}
In the statement of Theorem~\ref{theo:ck},
condition 3 trivially implies condition 2.
Equivalently, if an orbit is $C^k$-T-stable
it clearly is also $C^k$-HL-stable.
The reverse implication is the substance of this paper.
\end{remark}

The following proposition shows that condition 1 implies condition 3.

\begin{prop}
\label{prop:1to3}
[Condition 1 implies Condition 3]
\end{prop}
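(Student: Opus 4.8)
The plan is to write down the perturbation $\tilde\theta$ explicitly. Fix a linear functional $\lambda\in D^{*}$ with $\ker\lambda=D'$ and a projection $P\colon D\to D$ with image $D'$, so that $\lambda\circ P=0$. Condition~1 says exactly that $A^{(j)}(0)\circ P=0$ for $1\le j\le k$, i.e.\ the $\cL(D;H)$-valued map $z\mapsto(A(z)-A(0))\circ P$ vanishes to order $k+1$ at $z=0$; consequently $\bigl\|\tfrac{d^{p}}{dz^{p}}\bigl((A(z)-A(0))\circ P\bigr)\bigr\|\le C\,|z|^{\,k+1-p}$ for $0\le p\le k$ and $|z|\le\epsilon/2$, with $C$ depending only on $k$, $\|P\|$ and $\sup_{[-\epsilon/2,\epsilon/2]}\|A^{(k+1)}\|$. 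Given $\delta\in(0,\epsilon/2)$, I would pick a small parameter $s\in(0,\delta/2)$ (fixed at the very end), a fixed cutoff $\chi$ with $\chi\equiv1$ on $[-1,1]$ and $\operatorname{supp}\chi\subset[-2,2]$, a fixed bump $\rho\ge0$ with $\rho>0$ on $(-1,1)$ and $\operatorname{supp}\rho\subseteq[-1,1]$, and set $\chi_{s}(z)=\chi(z/s)$, $\rho_{s}(z)=c\,\rho(z/s)$ with $c=c(s,\delta)>0$ small enough that $\|\rho_{s}\|_{C^{k}}<\delta/2$ (this costs nothing, since we only need $\rho_{s}(0)>0$, not a lower bound). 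Then I define the homogeneous family
\[
\tilde A(z)=A(z)-\chi_{s}(z)\,(A(z)-A(0))\circ P,\qquad
\tilde X_{j}=\lambda(e_{j})\,\rho_{s}(z)\,Z+\sum_{1\le i\le n}\tilde a_{ij}(z)\,Y_{i},
\]
$\tilde a_{ij}(z)$ being the entries of $\tilde A(z)$ and $e_{1},\dots,e_{n}$ the basis of $D$ dual to $x_{1},\dots,x_{n}$.

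The next step is to verify the formal part of Condition~3. The family $\tilde\theta$ is homogeneous by construction, and since $\chi_{s},\rho_{s}$ vanish outside $(-2s,2s)\subset(-\delta,\delta)$ we have $\tilde X_{j}=X_{j}$ on $M\smallsetminus(\TT^{n}\times(-\delta,\delta))$. Because all coefficients depend on $z$ alone, $[\tilde X_{j_{0}},\tilde X_{j_{1}}]$ has vanishing vertical component and horizontal component equal to $\rho_{s}(z)$ times $\tilde A'(z)$ applied to the vector $\lambda(e_{j_{0}})e_{j_{1}}-\lambda(e_{j_{1}})e_{j_{0}}$; that vector lies in $\ker\lambda=D'$, and on $[-s,s]\supseteq\operatorname{supp}\rho_{s}$ we have $\tilde A(z)=A(z)(\id-P)+A(0)P$, which is independent of $z$ on $D'$, so $\tilde A'(z)|_{D'}=0$ there and the bracket vanishes. (This is also how the ansatz is found: the commutation relations force $\tilde A$ to be $z$-independent on $D'$ wherever the vertical part is nonzero, and Condition~1 is precisely what makes this compatible with a $C^{k}$-small change.)

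I would then bound the perturbation. Its vertical part contributes $\|\rho_{s}\|_{C^{k}}<\delta/2$, and its horizontal part has $C^{k}$-norm $\|\chi_{s}\cdot(A(z)-A(0))\circ P\|_{C^{k}}$; expanding by the Leibniz rule and using the flatness estimate above, this is at most $C_{*}s$ with $C_{*}$ \emph{independent of $s$} (depending only on $k$, $n$, $\chi$, $\|P\|$ and $\sup_{[-\epsilon/2,\epsilon/2]}\|A^{(k+1)}\|$). Choosing $s$ with $C_{*}s<\delta/2$ gives $\|\tilde X_{j}-X_{j}\|_{C^{k}}<\delta$; shrinking $s$ further if needed also keeps $\tilde A(z)$ so $C^{0}$-close to the invertible $A(z)$ on the compact set $\{|z|\le2s\}$ that it stays invertible, which together with $\rho_{s}\ge0$ makes $\tilde X_{1},\dots,\tilde X_{n}$ pointwise independent; hence $\tilde\theta$ is locally free. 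I expect this Leibniz estimate, and the bookkeeping of the successive smallness requirements on $s$ and on $\rho_{s}$, to be the only genuinely computational point.

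Finally, non-compactness. By the $\TT^{n}$-translation symmetry of $\tilde\theta$ it suffices to consider $p_{0}=(y_{0},0)$. Let $\pi\colon M\to(-\epsilon,\epsilon)$ be the projection onto the $z$-coordinate and $\Psi_{t}$ the flow of the vector field $\rho_{s}(z)\,\partial_{z}$, which is complete on $(-\epsilon,\epsilon)$. Since each $\tilde X_{j}$ has vertical component $\lambda(e_{j})\rho_{s}(z)$, the $z$-coordinate of $\tilde\theta_{v}(p_{0})$ is $\Psi_{\lambda(v)}(0)$, so $\pi$ carries the orbit of $p_{0}$ onto the $\Psi$-orbit of $0$. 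As $\rho_{s}(0)>0$ while $\rho_{s}$ vanishes on $\{|z|\ge s\}$, this orbit is a bounded open interval around $0$ contained in $(-s,s)$, hence not compact; therefore the orbit of $p_{0}$ is not compact, which is Condition~3.
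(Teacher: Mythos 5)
Your proof is correct and follows essentially the same route as the paper's: cut off $(A(z)-A(0))|_{D'}$ near $z=0$ (the $C^k$-smallness coming from the Taylor flatness that Condition~1 provides), and add a vertical bump weighted by the functional dual to $D'$, which commutes because the relevant bracket vectors lie in $D'$ where $\tilde A'$ vanishes near $0$. Your explicit Leibniz/flatness estimate for the $C^k$-smallness and your non-compactness argument via the $z$-projection of the orbit (using that the vertical part of $\sum_j v_j\tilde X_j$ is $\lambda(v)\rho_s(z)Z$) are cleaner and more detailed than the paper's, which merely asserts the modified $\hat A$ and lifts a horizontal line, but the underlying construction is identical.
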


\begin{proof}
First modify the action $\theta$ (resp. $X_j$, $A$)
to obtain $\hat\theta$ (resp. $\hat X_j$, $\hat A$) such that,
for some $\eta \in (0,\delta/2)$ we have:
\begin{itemize}
\item{$\|\hat X_j - X_j\|_{C^k} < \delta/2$;}
\item{for all $w \in D'$ and for all $z \in (-\eta,\eta)$
we have $\hat A(z)w = A(0)w$;}
\item{if $z \in (-\epsilon,\epsilon) \smallsetminus (-\delta,\delta)$
then $\hat A(z) = A(z)$.}
\end{itemize}
{Notice that this is possible since 
$(A^{(j)}(0))w = 0$ for all $j$ ($1 \le j \le k$)
and all $w \in D'$, as in Condition 1.}

Assume for concreteness that $D$ is the vector space of column vectors.
Let $D^\ast$ be the dual of $D$, the space of row vectors.
Let $w^\ast \in D^\ast$ be a dual to $D'$:
for all $w \in D$ we have
$w \in D'$ if and only if $w^\ast w = 0$.
We may assume that
$w^\ast = (w^\ast_1 \cdots w^\ast_n)$
is a unit row vector.
Thus, for $j_0 \ne j_1$ the column vector
\( w_{j_0,j_1} = 
 -w^{\ast}_{j_1} e_{j_0} + w^{\ast}_{j_0} e_{j_1} \)
belongs to $D'$.
% $(0, \ldots, -w^\ast_{j_1}, \ldots, w^\ast_{j_0}, \ldots, 0)$
Let $\beta: (-\epsilon,\epsilon) \to \RR$ be a bump function
with $\beta(z) \ge 0$ (for all $z \in (-\epsilon,\epsilon)$),
support contained in $(-\eta/2,\eta/2)$, $\beta(0) > 0$
and $\|\beta\|_{C^k} < \delta/2$.
The desired family $(\tilde X_j)$ is
\[ \tilde X_j = \hat X_j + w^\ast_j \beta(z) Z. \]
We verify commutativity:
\[ [\tilde X_{j_0},\tilde X_{j_1}] = 
\beta(z) \left(
[\hat X_{j_0}, w^\ast_{j_1} Z] - 
[\hat X_{j_1}, w^\ast_{j_0} Z]\right)
= \beta(z) \hat A'(z) w_{j_0,j_1}
= 0.\]
We are left with verifying that the orbits are not compact.
{Choose $j_\bullet$ such that $w^\ast_{j_\bullet} \ne 0$;}
assume without loss of generality that $w^\ast_{j_\bullet} > 0$.
For every point $p$ is the base torus, draw a curve
$\gamma: [-1,1] \to H$ of the form
$\gamma(t) = p + t e_{j_\bullet}$.
Lift $\gamma$ to obtain the curve
$\tilde\gamma(t) = (y_1(t), \ldots, y_n(t), z(t))$,
which is contained in the orbit under consideration.
Since the length of $\gamma$ is small,
the initial point is away from the boundary
and $\tilde\theta$ is almost horizontal
it follows that $\tilde\gamma: [-1,1] \to \TT^n \times (-\epsilon,\epsilon)$
is well defined (see Remark~\ref{remark:lift}).
We have $z'(t) > 0$ for all $t$,
implying that $\tilde\gamma(t+1)$ lies strictly above $\tilde\gamma(t)$
for all $t \in [-1,0]$
(i.e., both points have the same coordinate in $\TT^n$
and different coordinates in $(-\epsilon,\epsilon)$).
In particular, the orbit is not compact,
completing the proof.
\end{proof}

\medskip

\begin{remark}
\label{remark:lift}
{
We clarify the concept of \textit{lifting} a curve,
used in \cite{stable} and in the above proof.
Given an almost horizontal local action $\tilde\theta$
of $D$ on $\TT^n \times (-\epsilon,\epsilon)$,
orbits define an almost horizontal foliation.
At a point $p \in \TT^n \times (-\epsilon,\epsilon)$,
the tangent plane to the foliation is spanned by
$\tilde X_1(p), \ldots, \tilde X_n(p)$.
}

There are two surjective maps
$\Pi_M: \TT^n \times (-\epsilon,\epsilon) \to \TT^n$
(projection onto the first coordinate)
and $\Pi_H: H \to \TT^n$
(the universal cover).
Let $J \subset \RR$ be an interval.
A curve $\tilde\gamma: J \to \TT^n \times (-\epsilon,\epsilon)$
is a \textit{lifting} of $\gamma: J \to H$ if
$\tilde\gamma$ is contained in a leaf of the foliation
and $\Pi_M \circ \tilde\gamma = \Pi_H \circ \gamma$.

Given an initial point, lifting exists (and is unique)
provided the lifted curve
$\tilde\gamma$ does not exit the manifold by going too far up or down.
In other words, short curves lift provided we start away from the boundary.
\end{remark}

\bigbreak

% \section{The hard implication}
% \label{section:hard}

We are left with the hard part:
proving that condition 2 
(in Theorem~\ref{theo:ck})
together with
the negation of condition 1 imply a contradiction.
Initially, the construction closely follows
that of Section~2 in \cite{stable}.
The crucial difference appears in the end,
where instead of the mean value theorem
we use divided differences and Proposition~\ref{prop:fh}.
We find the repetition of definitions and of the statements
of a few preliminary results to be helpful for the reader.

We first revise Proposition~2.2 from \cite{stable},
which is a result about almost horizontal foliations of the manifold $M$.
An almost horizontal foliation can described by the normal vector field 
$(c_1(p), \ldots, c_n(p), 1)$
(with $c_i: M \to \RR$ and additional integrability conditions),
or more simply, by the functions $c_i$.
The horizontal foliation $\cF_0$ is given by
$c_1(p) = \cdots = c_n(p) = 0$ (for all $p \in M$).
A foliation $\cF_1$ is \textit{good}
if the corresponding functions $c_i$
are $C^0$-close to $0$ so as to imply several conditions,
which we omit here but are detailed in
Definitions~2.0 and 2.1~in~\cite{stable}.
{The first condition is that
the foliation $\cF_1$ must be almost horizontal.}
Another condition is the following
{(see Remark~\ref{remark:lift}).}
Given a continuous path $\gamma: [0,1] \to H$ of length less than $4n^2$
and an initial point $p_0 = (\gamma(0),z_0)$ with $|z_0| < \epsilon/2$,
$\gamma$ can be lifted to $\tilde\gamma: [0,1] \to M$ 
contained in a leaf of $\cF_1$ and with $\tilde\gamma(0) = p_0$.
A codimension one subspace $W \subset H$ is \textit{rational}
if it admits an equation with rational coefficients;
otherwise, $W$ is irrational.
Notice that if $W$ is rational then its projection
to $\TT^n = H/L$ is a torus $W/(L \cap W) \subset \TT^n$
of dimension $n-1$;
otherwise, it is a dense subset of $\TT^n$.

Let $\cF_1$ be a good perturbation of $\cF_0$
and $p_0 = (y_1,\ldots,y_n,z)$
with $|z| < \epsilon/2$
be such that the leaf passing through
$p_0$ is noncompact.
The proof of Proposition~2.2 from \cite{stable}
presents the construction of a hyperplane $H' \subset H$,
the \textit{good} hyperplane;
we shall revisit this construction below.
The following proposition lists key properties of
the good hyperplane $H' \subset H$.

\begin{prop}
\label{prop:stable22}
Let $\cF_1$ be a good perturbation of $\cF_0$
and $p_0 = (y_1,\ldots,y_n,z)$
with $|z| < \epsilon/2$
be such that the leaf passing through
$p_0$ is noncompact.
Let $H' \subset H$ be the good hyperplane.
The following properties hold:

\begin{enumerate}
\item{The natural affine projection $H' \to \TT^n = H/L$
taking $0$ to $(y_1,\ldots,y_n)$
can be globally lifted
to a function $H' \to M = \TT^n \times (-\epsilon,\epsilon)$
in such a way that $0$ is taken to $p_0$.}
\item{The good hyperplane $H'$ does not change if we replace $p_0$
by another point on the same leaf.}
\item{If $H'$ is irrational
% (i.e., does not admit an equation with rational coefficients)
then the closure of the image is a topological $n$-torus $T_0 = T_0(p_0)$
and its projection onto $\TT^n$ is a homeomorphism;
$T_0$ is contained in a union of noncompact leaves.}
\item{If $H'$ is rational
then the supremum and infimum of the intersection of the image
with each vertical line form two (possibly identical)
smooth $(n-1)$-tori $T_1^+ = T_1^+(p_0)$ and $T_1^- = T_1^-(p_0)$
contained in noncompact leaves and such that
their projections onto $H'/(L \cap H') \subseteq \TT^n$
are diffeomorphisms.}
\end{enumerate}
\end{prop}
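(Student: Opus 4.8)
The plan is to obtain this as a reorganized form of Proposition~2.2 of \cite{stable}, recalling enough of the mechanism to check that the (cosmetic) changes in the statement are harmless. The central object is the \emph{holonomy} of the good foliation. Since $\cF_1$ is good, its leaves are everywhere transverse to the vertical fibres $\{y\}\times(-\epsilon,\epsilon)$, so $\pi\colon M\to\TT^n$ restricts on each leaf to a local diffeomorphism; integrability of $\cF_1$ makes this ``connection'' flat, so for each $v\in L=\pi_1(\TT^n,\bar p_\bullet)$, with $\bar p_\bullet=(y^\bullet_1,\dots,y^\bullet_n)$, one obtains a partially defined monotone holonomy map $h_v$ of the fibre over $\bar p_\bullet$, with $h_{v+w}=h_v\circ h_w$ on the common domain; goodness guarantees that $h_v$ is defined at every height of modulus $<\epsilon/2$ as soon as $v$ is represented by a loop of length $<4n^2$, and such $v$ generate $L$. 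First I would observe that a leaf closes up over a loop $\gamma$ precisely when $h_{[\gamma]}$ fixes its height; hence, setting $L'=\{v\in L:h_v(z_\bullet)=z_\bullet\}$, non-compactness of the leaf through $p_\bullet$ forces $\operatorname{rank}L'\le n-1$, since $\operatorname{rank}L'=n$ would exhibit the leaf as a graph over the finite cover $H/L'$ of $\TT^n$.

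Next I would extract $H'$. Lifting the leaf through $p_\bullet$ to the universal cover $\tilde M=H\times(-\epsilon,\epsilon)$, each component is the graph of a function of small gradient with values in $(-\epsilon,\epsilon)$, invariant under $L'$; following \cite{stable}, over a unique affine hyperplane through a lift of $p_\bullet$ this graph extends globally while remaining inside $(-\epsilon,\epsilon)$, its linear part $H'$ contains $\RR\cdot L'$, and pushing the restricted graph back down to $M$ yields the global lift of the affine projection $H'\to\TT^n$ demanded by Property~1. The dichotomy would then be read off from the rank of $L\cap H'$. If $L\cap H'$ has rank $n-1$ ($H'$ rational), the restricted graph is $(L\cap H')$-periodic and descends to a leaf-wise graph over the subtorus $H'/(L\cap H')$; since the height must then drift monotonically as one runs off to infinity transversally inside $H'$, its $\limsup$ and $\liminf$ over each vertical line are attained and sweep out the two smooth $(n-1)$-tori $T_1^\pm$. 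If $\operatorname{rank}(L\cap H')\le n-2$ ($H'$ irrational), the restricted graph is again $(L\cap H')$-periodic but now has dense image in $\TT^n$; using that the leaf is \emph{uniformly} close to horizontal, its height function is uniformly continuous for the ambient metric, so its closure $T_0$ is the graph of a continuous function $\TT^n\to(-\epsilon,\epsilon)$, i.e.\ a topological $n$-torus mapping homeomorphically onto $\TT^n$. In both cases $T_0$, resp.\ $T_1^\pm$, being closures of subsets of the original non-compact leaf and saturated by $\cF_1$, lie inside non-compact leaves, and, being leaf-wise graphs, $T_1^\pm$ are as smooth as $\cF_1$. For uniqueness and the last sentence I would note that $h_v$, $L'$ and hence $H'$ depend only on the leaf --- changing the base point merely conjugates the $h_v$ by an orientation-preserving reparametrisation --- and that $H'$ is recovered intrinsically as the set of directions along which the developed leaf is recurrent.

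The step I expect to be the genuine obstacle --- and the heart of Proposition~2.2 of \cite{stable} --- is the \emph{global} lifting in Property~1 together with the precise shape of the limit sets: that $T_0$ is a topological $n$-torus projecting homeomorphically, and that $T_1^\pm$ are smooth and contained in non-compact leaves. Relative to \cite{stable} the only new features here are the explicit thresholds $\epsilon/2$ and $4n^2$ and the separation of $T_0$ from $T_1^\pm$ in the statement, neither of which affects that argument; everything else is routine manipulation of the holonomy maps $h_v$.
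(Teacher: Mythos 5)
Your proposal frames the argument in the language of holonomy maps $h_v$ indexed by the full lattice $L$ and of developed leaves in the universal cover, whereas the paper (and \cite{stable}, to which both it and you defer) works with the commuting vector fields $\tilde Y_i$ tangent to $\cF_1$, their time-one maps $f_i^{\pm}$, the induced commuting circle maps $g_i$ and their rotation numbers $h_i$, with $H'$ emerging as the hyperplane $h_1y_1+\cdots+h_ny_n=0$. These two framings are close cousins---your $h_v$ are products of the paper's $f_i^{\pm}$---and the rational case and the final ``$H'$ depends only on the leaf'' observation translate directly; so as an outline of where the result comes from, your route is serviceable.

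There is, however, a genuine gap in your irrational case. You claim that because the leaf is uniformly close to horizontal, the developed height function over $H'$ is ``uniformly continuous for the ambient metric,'' so its closure is a continuous graph $\TT^n\to(-\epsilon,\epsilon)$. Near-horizontality controls the gradient of the height along the leaf, i.e.\ it gives a small Lipschitz constant in the \emph{intrinsic} metric of $H'$. It says nothing about two points of the developed leaf that lie over nearby points of $\TT^n$ but are very far apart inside $H'$---and when $H'$ is irrational this is exactly the situation that matters. Without an extra ingredient, the closure of the projected leaf could a priori be a Denjoy-type minimal set (a Cantor set cross a torus), not a graph. The ingredient that rules this out is Denjoy's theorem applied to the circle maps $g_i$ (which are smooth because $\cF_1$ is at least $C^2$): it supplies the continuous, strictly increasing conjugacy $\phi$ with $\phi(f_i^s(z))=\phi(z)+sh_i$, and the torus $T_0$ is then \emph{constructed} as $\{\phi(z')=h_1y_1+\cdots+h_ny_n\}$, a continuous graph by fiat. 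The paper's rediscussion makes this step explicit; your sketch omits it, and ``uniform closeness to horizontal'' cannot replace it.

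Two smaller imprecisions worth flagging: you write that $H'$ ``contains $\RR\cdot L'$,'' which implicitly assumes $L'\subset H'$; this is true but deserves a sentence (in the paper's terms it is the observation that $v\in L'$ forces $\sum h_iv_i\in\ZZ$ and in fact $=0$ once one normalizes). And the assertion that ``over a unique affine hyperplane through a lift of $p_\bullet$ this graph extends globally'' is stated as though it were obvious; it is precisely the existence-and-uniqueness content of Proposition~2.2 of \cite{stable}, and your proposal does not supply an argument for it beyond the citation---which you do, to your credit, acknowledge.
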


\begin{remark}
\label{remark:stable22}
The proof of Proposition 2.2 in \cite{stable}
presents an explicit construction of the hyperplane $H'$,
which we review below.
The proof in \cite{stable}
includes the verification of several useful properties,
similar to those stated in Proposition~\ref{prop:stable22}. 

The statement of Proposition 2.2 in \cite{stable}
also includes the statement that $H'$
is the \emph{only} hyperplane with the stated properties.
This uniqueness claim is nowhere used in \cite{stable}.
The statement of Proposition~\ref{prop:stable22}
does not include the uniqueness claim
and in the present paper we neither use
nor prove such claims.
\end{remark}

We find it useful to rediscuss both the construction of $H'$
and the proofs of Proposition~\ref{prop:stable22}
and of Proposition 2.2 in \cite{stable}.
This includes both a sketch of a slightly different proof and
some additional remarks.
We also introduce some objects
which in \cite{stable} first appear in this proof,
notably the functions $f^{\pm}_{i}$ where $1 \le i \le n$.
These functions will be necessary in the proof of Theorem~\ref{theo:ck},
and are closely related to a function $f$
as in the statement of Proposition~\ref{prop:fh}.
% We present a similar, more general construction.

% In \cite{stable},
% we construct smooth increasing functions
% $f_{i}^{s} : (-\epsilon/2,\epsilon/2) \to (-\epsilon,\epsilon)$
% where  $1 \le i \le n$ and $s \in \{ \pm 1 \}$.
% Here the functions $f^{\pm}_i$ are constructed
% in a slightly different manner.

% Assume $\tilde\theta$ and vector fields $\tilde X_j$
% as in condition 2.
% The orbits of $\tilde\theta$ define a foliation $\cF$

Let $\cF_1$ be a good perturbation of $\cF_0$.
The foliation $\cF_1$ is almost horizontal,
in particular transversal to the vertical vector $Z$.
At each point $p$ and for each value of $i$ ($1 \le i \le n$),
the vector $\tilde Y_i = Y_i - c_{i}(p) Z$
is tangent to $\cF_1$ at $p$.
% Since $\cF_1$ is smooth and near $\cF_0$ (in the $C^k$ metric),
% $\tilde c_{i}: M \to \RR$ is a smooth function near zero.
The vector fields $\tilde Y_i = Y_i - c_{i} Z$
are smooth and commute with each other.
Indeed,
we have $D\Pi(p) \tilde Y_i(p) = Y_i(\Pi(p))$
where $\Pi:M \to \TT^n$ is the vertical projection.
% for all $p \in M$.

For $s \in \{-1,1\}$ and $i$,
integrate the vector fields $s \tilde Y_i$
at time $1$ to obtain a local diffeomorphism
$F_i^s: U_{i,s} \to M$;
here $U_{i,s} \subseteq M$,
$U_{i,s} \supseteq \TT^n \times (-\epsilon/2,\epsilon/2)$,
is a maximal open subset.
{Thus, $F_i^{-1}$ is a local inverse of $F_i^{+1}$,
meaning that $p \in \TT^n \times (-\epsilon/2,\epsilon/2)$
implies $F_i^{-1}(F_i^{+1}(p)) = F_i^{+1}(F_i^{-1}(p)) = p$.
On the other hand, the domains $U_{i,\pm 1}$
are usually strictly contained in $M$,
so that the maps $F_i^{\pm 1}$ are
local diffeomorphisms but not diffeomorphisms.}

Equivalently, given $s$, $i$ and $p \in M$
(try to) construct a smooth curve
$\gamma: [0,1] \to M$ with $\gamma(0) = p$ and 
$\gamma'(t) = s \tilde Y_i(\gamma(t))$:
take $F_i^s(p) = \gamma(1)$.
If $p = (y_1, \ldots, y_n, z)$
then $F_i^s(p) = (y_1, \ldots, y_n, f^s_{i,p}(z))$
where $f^s_{i,p}: J \to (-\epsilon,\epsilon)$
is a smooth increasing function near the identity;
its (maximal) domain is an open interval $J \subseteq (-\epsilon,\epsilon)$,
$J \supseteq (-\epsilon/2,\epsilon/2)$.
If $\cF_1$ is $C^k$-near $\cF_0$
then both $F_i^s$ and $f_i^s$ are $C^k$-near the respective identities.

% Construct the auxiliary path  
% $\gamma_{j,p}^s: [0,1] \to \TT^n$,
% $\gamma_{j,p}^s(t) = p + stY_j$;
% the only nonzero coordinate of the vector
% $stY_j = (0, \ldots, st, \ldots, 0) \in H$
% is in the $j$-th position.
% Given $z \in (-\epsilon/2,\epsilon/2)$,
% lift the path $\gamma_{j,p}^s$
% to define
% $\tilde\gamma_{j,p}^s: [0,1] \to \TT^n \times (-\epsilon,\epsilon)$
% tangent to $\cF$,
% starting at
% $\tilde\gamma_{j,p}^s(0) = p + zZ = (p,z) \in M$.
% The final point is
% $\tilde\gamma_{j,p}^s(1) = (p,f_{j,p}^s(z))$.
% For a good foliation $\cF_1$ % sufficiently small $\delta > 0$
% the lifting process can be performed.
The functions $f_{i,p}^s$ satisfy $(f_{i,p}^s)^{-1} = f_{i,p}^{-s}$;
more precisely, the inverse $(f_{i,p}^s)^{-1}$ is well defined as,
say, $(f_{i,p}^s)^{-1}: (-\epsilon/3,\epsilon/3) \to (-\epsilon/2,\epsilon/2)$
and coincides in $(-\epsilon/3,\epsilon/3)$ with the restriction
of $f_{i,p}^{-s}$.
Also, the functions $f_{i_0,p}^{s_0}$ and $f_{i_1,p}^{s_1}$ commute
(again after taking appropriate restrictions to an interval
contained in $(-\epsilon/2,\epsilon/2)$).
% The functions $f_{i,p}^s$ are also of class $C^k$
% and near the identity in the $C^k$ norm.
When possible, we abuse notation
(disregarding the domains) and write $f_{i,p} = f^{+1}_{i,p}$
and interpret $f^{-1}_{i,p}$ to be its inverse.

We may assume that the point in the statement of
Proposition~\ref{prop:stable22} is $p_\bullet = (0,\ldots,0, z_\bullet)$.
% where $p_H = (0,\ldots,0) \in \TT^n$ is a base point.
Since the leaf of $\cF_1$ containing $(0,\ldots,0,z_\bullet)$ is noncompact,
we may also assume without loss of generality that
$f_{n,p_\bullet}^{-1}(z_\bullet) \le f_{i,p_\bullet}^{-1}(z_\bullet)
\le z_\bullet
\le f_{i,p_\bullet}(z_\bullet) \le f_{n,p_\bullet}(z_\bullet)$;
and that
$f_{n,p_\bullet}^{-1}(z_\bullet) < z_\bullet < f_{n,p_\bullet}(z_\bullet)$.
When the base point is fixed we write $f_i = f_{i,p_\bullet}$.
We shall later apply Proposition~\ref{prop:fh}
with $f = f_{n}$.

Take the interval $(f_n^{-1}(z_\bullet),f_n(z_\bullet))$
and identify $z$ with $f_n^{\pm 1}(z)$
to obtain a copy of $\Ss^1$.
The functions $f_1, \ldots, f_{n-1}$
induce functions $g_i: \Ss^{1} \to \Ss^{1}$ ($1 \le j < n$)
with rotation numbers $h_i \in [0,1]$;
the convention $h_n = 1$ is useful
(indeed, almost unavoidable given
the above identification between $z$ and $f_n(z)$).
Notice that the function $g_i$ commute.
It turns out that an equation for $H'$ is $h_1 y_1 + \cdots + h_n y_n = 0$.
In particular, $H'$ is rational if and only if
$h_i \in \QQ$ for all $j$.

In the irrational case, the proof of Proposition~\ref{prop:stable22}
uses Denjoy's theorem \cite{Denjoy}.
Indeed, Denjoy gives us a continuous and strictly increasing
function $\phi: J \to \RR$
(where $J$ is an open interval with
$(f_n^{-3n}(z_{\bullet}), f_n^{3n}(z_{\bullet})) \subseteq
J \subseteq (-\epsilon,\epsilon)$)
satisfying $\phi(z_\bullet) = 0$ and 
$\phi(f_i^s(z)) = \phi(z) + s h_i$ (for $1 \le i \le n$, $s \in \{\pm 1\}$).
Given $p = (y_1,\ldots,y_n,z) \in M$, $0 \le y_i < 1$, 
let $\gamma:[0,1] \to H$, $\gamma(t) = (1-t) (y_1,\ldots,y_n)$.
Lift $\gamma$ to obtain $\tilde\gamma$ tangent to $\cF_1$,
with $\tilde\gamma(0) = p$ and $\tilde\gamma(1) = (0,\ldots,0,z')$.
The torus $T_0(p_\bullet)$ is defined by the equation
$\phi(z') = h_1 y_1 + \ldots + h_n y_n$:
this torus is a graph of a continuous function from
$\TT^n$ to $(-\epsilon,\epsilon)$.
More generally, if $p = (y_1^\bullet, \ldots, y_n^\bullet, z)$
and $z$ is near $z_\bullet$ then
$T_0(p)$ has equation
\[ \phi(z') - \phi(z) =
h_1 (y_1 - y_1^\bullet) + \ldots + h_n (y_n-y_n^\bullet). \]
The tori $T_0(p)$ thus define a $C^0$ foliation 
of an open neighborhood of $T_0(p_\bullet)$.
Recall that Denjoy's theorem gives us only $\phi \in C^0$
even if the functions $g_i$ are known to be smooth.
Thus, in the irrational case, the tori $T_0(p)$ are (in principle) not smooth.
They are, however, $C^0$ near the horizontal torus.
% Indeed, the almost horizontal copy of $[0,1]^n$ (resp. $[-1,0]^n$)
% obtained by integrating $\sum_i y_i \tilde Y_i$
% with $(y_1, \ldots, y_n) \in [0,1]^n$ (resp. $[-1,0]^n$)
% lies above (below) the torus~$T_0$.

We are in the rational case
when the functions $g_i: \Ss^1 \to \Ss^1$
have rational rotation numbers.
It is then not necessarily the case
that the functions $g_i$ are conjugate to rotations.
Let $P \subseteq \Ss^1$ be the set of common periodic points
of all the functions $g_i$:
the set $P$ is compact, nonempty and invariant under $g_i$.
Lift $P$ to obtain $\tilde P_{\bullet} \subseteq (-\epsilon,\epsilon)$.
The set $\{p_\bullet\} \times P_{\bullet} \subset M$ is almost invariant
under the diffeomorphisms $F_i^{\pm}$,
meaning that
$F_i^s[U_{i,s} \cap (\{p_\bullet\} \times P_{\bullet})] 
\subseteq \{p_\bullet\} \times P_{\bullet}$.
The tori $T_0^{\pm}$ correspond to performing the construction above
not on $(p_{\bullet},0)$ but on
$(p_{\bullet},z^{\pm})$ where $z^{-} \le 0 \le z^{+}$,
$z^{\pm} \in P_{\bullet}$ and
$z^{+}$ (resp. $z^{-}$) is minimal (resp. maximal)
with the above conditions.

This completes our review of the proof of Proposition~\ref{prop:stable22},
which is about foliations.
We now turn again our attention towards actions.

\bigbreak

% Rewrite Equation~\eqref{eq:a} as
% \begin{equation}
% \label{eq:b}
% Y_i = \sum_{1 \le j \le n} b_{ji}(z) X_j(y_1, \ldots, y_n, z), \quad
% B(z) = (A(z))^{-1}. 
% \end{equation}
% Here, $B: (-\epsilon,\epsilon) \to \RR^{n \times n} = \cL(H;D)$
% is a smooth path and
% $B(z) = (A(z))^{-1}$ is a linear transformation from $H$ to $D$.

Similarly to Equation~\eqref{eq:bt} in Remark~\ref{remark:b},
we can rewrite Equation~\eqref{eq:tildea} as
\begin{equation}
\label{eq:tildeb}
\tilde Y_i(y_1, \ldots, y_n, z) =
% - \tilde b_{Z,i}(y_1, \ldots, y_n, z) Z +
\sum_{1 \le j \le n} \tilde b_{ji}(y_1, \ldots, y_n, z)
\tilde X_j(y_1, \ldots, y_n, z).
\end{equation}
Recall that $\tilde Y_i(p) = Y_i - c_i(p) Z$
is tangent to $\cF_1$ (at $p$) and therefore
is a linear combination of the vectors $\tilde X_j(p)$,
consistently with the equation.
The coefficients $\tilde b_{ji}$ define a smooth function
$\tilde B: M \to \RR^{n \times n} = \cL(H;D)$
which is by hypothesis near $B$
(or more precisely, near the modification of the path $B$
to obtain a smooth function 
$B: (M = \TT^n \times (-\epsilon,\epsilon)) \to \RR^{n \times n} = \cL(H;D)$
where the $\TT^n$ coordinate is ignored,
{(see Equation~\eqref{eq:bt}).}
Let $\tilde\tau_p \in \cL(H';D)$ be the restriction of $\tilde B(p)$ to $H'$.

We use vertical projection to define affine structures
and unit measures in $T_0$ or $T_1^+$.
In the irrational case, the projection is a homeomorphism
from $T_0$ to the affine torus $\TT^n$,
which gives us the desired structures.
In the rational case, projection takes $T_1^+$
to an affine codimension one subtorus of $\TT^n$.
This subtorus is spanned by $H'$ and has codimension $1$.
In either case, $\tau = \tau(p_\bullet) \in \cL(H';D)$
is defined by an average (see Definition~2.3 in \cite{stable}):
\begin{equation}
\label{equation:tau}
\tau = \int_{T_0} \tilde\tau_p dp
\quad \textrm{or} \quad
\tau = \int_{T_1^+} \tilde\tau_p dp.
\end{equation}
Since $\tau$ is the average of $\tilde\tau_p$
($p$ in either $T_0$ or $T_1^+$)
and $\tilde\tau_p$ is near $B(z_{\bullet})$
it follows that $\tau$ is an injective linear transformation
near the restriction of $B(z_\bullet)$.
Let $D' \subset D$ be the image of $\tau: H' \to D$,
a subspace (also of codimension $1$).

In order to make use of $\tau$ we need another formula for it.
For this, we define a continuous map $\xi: H' \to D$
($\xi$ is usually not a linear transformation).
In the rational case,
assume for simplicity that $p_\bullet \in T_1^+$.
Given $v \in H'$,
define $\gamma: [0,1] \to H' \subset H$,
$\gamma(t) = tv$.
Lift $\gamma$ to obtain $\tilde\gamma: [0,1] \to M$ tangent to $\cF_0$,
$\tilde\gamma(0) = p_\bullet$.
Notice that $\tilde\gamma$ assumes values in $T_0$ or $T_1^+$.
Define
\[ \xi(v) = \int_0^1 \left( \tilde\tau_{\tilde\gamma(t)} v \right) dt \]
so that $\tilde\theta(\xi(v))(p_\bullet) = \tilde\gamma(1) = (v,z)$
(for some $z$; $\tilde\theta$ is the perturbed action).
If $v_0, v_1 \in H'$, 
define $\gamma_0$ and $\gamma_1$ as above and
$\gamma_{0,1}: [0,1] \to H'$,
$\gamma_{0,1}(t) = (1-t)v_0 + tv_1$.
The curves $\tilde\gamma_i$ are defined as above;
$\tilde\gamma_{0,1}$ is defined so that
$\tilde\gamma_{0,1}(0) = \tilde\gamma_0(1)$ and
$\tilde\gamma_{0,1}(1) = \tilde\gamma_1(1)$.
We then have
\[ \xi(v_1) - \xi(v_0) =
\int_0^1 \left( \tilde\tau_{\tilde\gamma_{0,1}(t)} (v_1 - v_0) \right) dt: \]
this implies that $\xi: H' \to D$ is Lipschitz.
We are ready to review Lemma~2.4 from \cite{stable}:

\begin{lemma}
\label{lemma:stable24}
Assume that the perturbed action $\tilde\theta$ is good.
The linear transformation $\tau$ is injective, constant on orbits
and satisfies $\tau(v) = \lim_{t \to \infty}{{1 \over t}\xi(tv)}$.
\end{lemma}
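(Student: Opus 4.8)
The plan is to verify the three assertions of Lemma~\ref{lemma:stable24} in turn, using the averaging formula~\eqref{equation:tau} and the integral description of $\xi$ that precedes the statement. \textbf{Injectivity.} This is essentially already observed: $\tau$ is an average of the linear maps $\tilde\tau_p \in \cL(H';D)$, each of which is close to the restriction $B(z_\bullet)|_{H'}$, and the latter is injective on the hyperplane $H'$. An average of operators each within $\delta'$ of a fixed injective operator is itself within $\delta'$ of that operator, hence injective once $\delta'$ is small relative to the least singular value of $B(z_\bullet)|_{H'}$. So $\tau: H' \to D$ is injective with image $D'$ of codimension one.

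\textbf{The limit formula.} First I would record the telescoping/Lipschitz estimate already derived, namely $\xi(v_1)-\xi(v_0)=\int_0^1 \tilde\tau_{\tilde\gamma_{0,1}(t)}(v_1-v_0)\,dt$, and specialize it. Fix $v \in H'$ and, for a positive integer $N$, write $\xi(Nv)$ as a telescoping sum $\sum_{j=0}^{N-1}\bigl(\xi((j+1)v)-\xi(jv)\bigr)$, each summand being $\int_0^1 \tilde\tau_{\tilde\gamma_{j}(t)} v\,dt$ where $\tilde\gamma_j$ is the lift of $t\mapsto (j+t)v$ starting at the appropriate point. The point of Proposition~\ref{prop:stable22} is that these lifts all lie in $T_0$ (irrational case) or $T_1^+$ (rational case), and that vertical projection identifies that set with the affine torus $\TT^n$ (resp. the subtorus $H'/(L\cap H')$) carrying its Haar measure. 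So each lift, pushed down, is a piece of a line of slope $v$ on the torus; as $N \to \infty$, the family of these $N$ segments equidistributes — in the irrational case by unique ergodicity of the linear flow in direction $v$ on $\TT^n$ (or on the appropriate subtorus, using that the foliation restricted there is minimal), in the rational case directly because the orbit closes up and one integrates over a full period. Hence $\frac1N \xi(Nv) = \frac1N\sum_{j=0}^{N-1}\int_0^1 \tilde\tau_{\tilde\gamma_j(t)} v\,dt \to \int_{T_0}\tilde\tau_p\,dp\;v = \tau v$ (resp. the $T_1^+$ integral). To pass from the integer sequence $N$ to the continuous limit $t\to\infty$ one uses that $\xi$ is Lipschitz, so $\|\xi(tv)-\xi(\lfloor t\rfloor v)\|$ is bounded uniformly in $t$ and contributes nothing after division by $t$; and the obvious extension to negative $v$ (or negative $t$) follows by symmetry of the construction in $s\in\{\pm1\}$.

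\textbf{Constancy on orbits.} This should follow once the limit formula is in hand, together with the defining property $\tilde\theta(\xi(v))(p_\bullet)=(v,z)$. If $p_\bullet'$ lies on the same leaf/orbit as $p_\bullet$, then by the last sentence of Proposition~\ref{prop:stable22} the hyperplane $H'$ is the same, and $T_0$ (resp. $T_1^+$) is the same set; the only change is the base point used to define $\xi$, which amounts to a bounded additive reparametrization of $\xi$ (composing with a fixed element of the orbit, i.e. adding a constant vector of $D$ to $\xi$), and such a bounded perturbation disappears in the limit $\frac1t\xi(tv)$. Therefore $\tau(p_\bullet')=\tau(p_\bullet)$. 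Alternatively, and perhaps cleanly, one invokes directly that $\tilde\tau_p$ is defined pointwise on $T_0$ and the measure on $T_0$ is canonical (vertical projection to Haar measure), so the average~\eqref{equation:tau} manifestly does not depend on which point of the leaf we called $p_\bullet$.

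The main obstacle I expect is the equidistribution step in the irrational case: one must be careful that "lift of the line of slope $v$" genuinely equidistributes with respect to the measure defining $\tau$, which requires knowing that the horizontal linear flow in the direction $v\in H'$ is uniquely ergodic on the relevant torus. When $v$ spans an irrational direction this is standard, but $\tau$ is asserted to be \emph{linear}, so one needs the limit to exist and be additive for \emph{all} $v\in H'$, including rational directions within the irrational hyperplane $H'$; there the line does not equidistribute on all of $\TT^n$ but only on a subtorus, and one must check the average of $\tilde\tau_p$ over that subtorus still equals $\tau v$. This is handled by Fubini together with the fact that the foliation of $T_0$ by these sub-orbits is itself (in the relevant quotient) uniquely ergodic, so that integrating $\tilde\tau_p v$ over a sub-orbit and then over the transverse parameter reproduces $\int_{T_0}\tilde\tau_p\,dp\;v$. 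Once that is granted, linearity of $\tau$ and the three claimed properties all fall out; I would present the equidistribution as the technical heart and treat the Lipschitz-reduction from integer to real $t$ and the base-point independence as routine.
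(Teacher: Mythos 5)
The paper does not actually give a proof of this lemma: it writes ``The proof is omitted; it is based on Birkhoff's theorem \cite{Mane} and the uniform continuity of $\xi$,'' deferring to \cite{stable}. So there is no paper proof to match word-for-word; one can only check your reconstruction against that two-ingredient hint and the surrounding text. Your overall plan (averaging, telescoping $\xi(Nv)$, ergodic equidistribution, Lipschitz control) is in the right spirit, and the injectivity part matches the sentence the paper already states before the lemma. But there are two concrete problems.

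First, the Fubini resolution of the rational-direction issue does not work. You correctly flag that for a rational $v\in H'$ the line equidistributes only on a proper subtorus, so the time average in direction $v$ need not equal $\int_{T_0}\tilde\tau_p\,dp\,v$. Fubini shows that the average over transversals of the sub-orbit averages equals the total average; it does not show that each individual sub-orbit average equals the total, and the limit $\lim_t\frac1t\xi(tv)$ sees only the single sub-orbit through $p_\bullet$. The fix — and this is evidently what the paper's phrase ``uniform continuity of $\xi$'' is for — is the uniform Lipschitz bound that you yourself derive: the maps $v\mapsto\frac1t\xi(tv)$ are Lipschitz with a constant independent of $t$ (since $\|\tfrac1t\xi(tv_1)-\tfrac1t\xi(tv_0)\|\le \sup_p\|\tilde\tau_p\|\,\|v_1-v_0\|$). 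Prove $\lim_t\frac1t\xi(tv)=\tau(v)$ on the dense set of totally irrational directions $v\in H'$ (where the $v$-flow on $T_0$ is uniquely ergodic), and then extend to all $v$ by equi-Lipschitz plus continuity of the linear map $\tau$. You have all the pieces; you just deployed the wrong one at the crucial step.

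Second, your ``clean alternative'' for constancy on orbits rests on the claim that replacing $p_\bullet$ by another point of the same leaf leaves $T_0$ (resp.\ $T_1^+$) unchanged. That is not what Proposition~\ref{prop:stable22} says — its last sentence asserts only that $H'$ does not change — and it is in fact false in the situation where the lemma is used: in the proof of Proposition~\ref{prop:2to1} the points $p_i=F_n^i(p_\bullet)$ lie on the same orbit as $p_\bullet$ but the paper explicitly notes $T_0(p_i)=F_n^i[T_0]$, a genuinely different torus, and then invokes this lemma to conclude $\tau_i=\tau_0$ anyway. So the whole content of ``constant on orbits'' is exactly that the averages over these \emph{different} tori coincide; saying ``the set and measure are canonical'' begs the question. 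Your first argument for constancy (that changing the base point changes $\xi$ by a bounded additive term, killed in the limit $\frac1t\xi(tv)$) is the right idea, but the comparison of $\xi_{p}$ with $\xi_{p_\bullet}$ needs care precisely because the lifts end up on different tori; you should justify why the discrepancy stays bounded along the whole ray rather than accumulating, for instance by tracking both lifts inside the single leaf through $p_\bullet$ and bounding their vertical separation using the monotonicity of the $f_i^{\pm}$ and the accumulation of the leaf on the $T_0(p_i)$.
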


The proof is omitted; it is based on Birkhoff's theorem \cite{Mane}
and the uniform continuity of $\xi$.
We are ready to prove the last implication
in Theorem~\ref{theo:ck}.

% The coefficient
% $\tilde b_{Z,i}$ is a smooth function
% $\tilde b_{Z,i}: M \to \RR$;
% this is by hypothesis near $0$.
% The smooth vector fields $\tilde Y_i = Y_i + \tilde b_{Z,i} Z$
% are tangent to the foliation $\cF_1$ and commute.

\begin{prop}
\label{prop:2to1}
[Condition 2 implies Condition 1]
\end{prop}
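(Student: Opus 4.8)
The plan is to prove the contrapositive: assuming condition 2 holds (there is a point in the base torus on a noncompact orbit of a good perturbation $\tilde\theta$) but condition 1 fails, derive a contradiction. So suppose that $\tilde\theta$ is a good perturbation of $\theta$ with a noncompact orbit through some point of the base torus. Applying the foliation machinery (Proposition~\ref{prop:stable22}) to the foliation $\cF_1$ underlying $\tilde\theta$, we obtain the hyperplane $H' \subset H$, the rotation numbers $h_1,\dots,h_{n-1}$ (with $h_n=1$), the functions $f_i = f_{i,p_\bullet}$, and the injective linear map $\tau = \tau(p_\bullet) \in \cL(H';D)$ with image $D' \subset D$ of codimension $1$. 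By Lemma~\ref{lemma:stable24}, $\tau$ is constant on orbits and $\tau(v) = \lim_{t\to\infty} \frac1t \xi(tv)$. The goal is to show that $D'$ witnesses condition~1, i.e.\ that $(A^{(j)}(0))w = 0$ for all $w \in D'$ and all $1 \le j \le k$; equivalently (Remark~\ref{remark:b}), $(B^{(j)}(0))v = 0$ for all $v \in H'$ and $1 \le j \le k$.

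The first main step is to re-express $\tau$ using the auxiliary functions. Working in the $\Ss^1$ obtained by collapsing $(f_n^{-1}(z_\bullet), f_n(z_\bullet))$ via $f_n^{\pm1}$, the commuting circle maps $g_i$ have rotation numbers $h_i$. One should show that along an orbit of $\tilde\theta$ the $z$-coordinate visits points of the form $f^{\mathbf m}(z_\bullet)$ for integer vectors $\mathbf m$, and that the displacement of $\tau$ in the $H'$-direction is governed by integrating $\tilde B$ over the (closure of the) orbit, which by the construction of $\tau$ is an average of $\tilde\tau_p = \tilde B(p)|_{H'}$. The key point is that $\tilde B$ is $C^k$-close to the modified path $B$ (constant in the $\TT^n$-direction), so the average $\tau$ is $C^k$-close to $B(z_\bullet)|_{H'}$, and moreover the \emph{derivatives} of the relevant one-parameter families are controlled. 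Here is where divided differences enter: for $v \in H'$, consider the quantity measuring how the orbit fails to close up in the $D/D'$-direction. Iterating $\tilde Y_n$ (i.e.\ applying $f = f_n$ repeatedly) produces the points $z_j = f^j(z_\bullet)$, and the relevant obstruction turns out to be a linear combination $\sum_j u_j\, (\text{component of } \tilde B(f^j(\tilde z_0)) v)$ — exactly a divided difference $[z_0,\dots,z_k; \text{(stuff)}_0,\dots,\text{(stuff)}_k]$ as in Equation~\eqref{equation:expand}.

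The second main step is to apply Proposition~\ref{prop:fh}. Suppose for contradiction that condition~1 fails for $D'$: then for some $j_0 \le k$ and some $w \in D'$ (equivalently some $v \in H'$) we have $(B^{(j_0)}(0))v \ne 0$; take $j_0$ minimal, so $B^{(j)}(0)v = 0$ for $j < j_0$. Let $h_0$ be the function $z \mapsto (\text{the }D/D'\text{-component of } B(z) v)$ (composed with the linear functional dual to $D'$); then $h_0^{(j)}(0) = 0$ for $1 \le j < j_0$ and $B := h_0^{(j_0)}(0) \ne 0$, with $\|h_0\|_{C^{j_0+1}}$ bounded by data depending only on $\theta$. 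Take $h$ to be the corresponding function built from $\tilde B$ along the orbit, so $\|h - h_0\|_{C^{j_0}} < \eta$ where $\eta \to 0$ as the perturbation shrinks; take $f = f_n$, which is $C^{j_0}$-close to the identity with $f(0) > 0$. Proposition~\ref{prop:fh} (applied with $k = j_0$) then says the divided difference $[z_0,\dots,z_{j_0}; h(\tilde z_0),\dots,h(\tilde z_{j_0})]$ has the same sign as $B$, hence is nonzero. But this divided difference measures the net drift of the orbit in the $D/D'$-direction over a closed-up loop in the $H'$-directions; since $\tau$ is constant on orbits and its image is exactly $D'$, this drift must vanish — a contradiction. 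If no such $j_0 \le k$ exists, then $(A^{(j)}(0))w = 0$ for all $w \in D'$ and $1 \le j \le k$, which is condition~1; contradiction with our assumption that condition~1 fails. This contradiction completes the proof.

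The hard part will be setting up precisely the bookkeeping that identifies the orbit-closing obstruction with the divided difference to which Proposition~\ref{prop:fh} applies: one must track how iterating the commuting near-identity maps $f_i$ (with rotation numbers $h_i$) produces, after closing a loop in $H'$, a return map on the vertical coordinate whose deviation from the identity is encoded by the divided difference $[z_0,\dots,z_k; h(\tilde z_0),\dots,h(\tilde z_k)]$ of the appropriate component of $\tilde B$ — and to verify that the hypotheses of Proposition~\ref{prop:fh} (the $C^k$-closeness of $h$ and $f$ to $h_0$ and $f_0$, the bounds on $\|h_0\|_{C^{k+1}}$ and $|B|$, and the smallness of $\eta$) all follow from goodness of the perturbation together with the standing data of $\theta$. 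The rational versus irrational cases for $H'$ should be handled as in \cite{stable}: in the rational case the loop genuinely closes up on the periodic set $P_\bullet$ and the argument is direct; in the irrational case one uses the Denjoy semiconjugacy $\phi$ and a density/continuity argument to reduce to near-closed loops, with the divided difference estimate providing a uniform sign that survives the limit.
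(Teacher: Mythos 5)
Your overall strategy --- argue by contradiction, use the foliation machinery of Proposition~\ref{prop:stable22} to obtain $H'$, $\tau$, and $D'=\im\tau$, and then invoke Proposition~\ref{prop:fh} on a divided difference built from $\tilde B$ along the orbit --- is the same as the paper's. But there is a genuine gap in how you handle the dependence of the data on the perturbation. You treat $D'$, the dual functional picking off the $D/D'$-component, and hence $h_0$, as if they were determined in advance; in fact $H'$ (and so $D'$) is produced \emph{by} the perturbation $\tilde\theta$. Condition~2 gives a sequence of perturbations $\tilde\theta_\ell$ with $C^k$ distance $\to 0$, each with its own $H'_\ell$. As $\ell$ varies, your choice of $(j_0, v, \omega)$ could drift, and then $h_0$ --- and with it the constants $B_{k}$, $B_{k+1}$, $C_k$ in Proposition~\ref{prop:fh} --- would change with $\ell$. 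One cannot simply ``let $\eta\to 0$ as the perturbation shrinks'' if the threshold $\min\{1,B_k\}/(2C_k)$ is itself moving. The paper's proof fixes this with a compactness argument: covering the Grassmannian of hyperplanes by finitely many compact balls $W_q$ with fixed $(j_q,\omega_q)$ witnessing the failure of condition~1, passing to a subsequence with $H'_\ell\to H'_\infty$, and choosing $q_\bullet$ with $H'_\infty\cap\interior(W_{q_\bullet})\ne\varnothing$. This pins down a single $h_0$ (built from $v_\infty$ and $\omega_\bullet$), so the constants are uniform in $\ell$, and ``sufficiently large $\ell$'' makes sense. This step is not bookkeeping; without it the quantifiers don't close.

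A secondary point: the phrase ``the drift in the $D/D'$-direction must vanish'' is not quite the right vanishing statement. What vanishes in the paper is $\bt=[z_0,\ldots,z_{k_\bullet};\tau_0,\ldots,\tau_{k_\bullet}]$, because all $\tau_i$ are equal by Lemma~\ref{lemma:stable24}; applying $\omega_\bullet(\cdot)w_\ell$ and using the averaging definition of $\tau$ in Equation~\eqref{equation:tau} converts this into an integral over $y\in[0,1)^n$ of scalar divided differences $[h_y(\tilde z_0),\ldots,h_y(\tilde z_{k_\bullet})]$, each of which Proposition~\ref{prop:fh} shows to be strictly positive. You gesture at a single scalar divided difference with no averaging, which does not by itself equal anything that is known to vanish. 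Making the passage $\bt=0\Rightarrow \int_{[0,1)^n}[h_y(\tilde z_0),\ldots,h_y(\tilde z_{k_\bullet})]\,dy=0$ explicit is where the construction of the functions $f_y$, $\zeta$, $\beta_y$, $h_y$ is actually needed, and that part of your proposal is still unsubstantiated.
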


\begin{proof}
Our proof is by contradiction.
We assume that condition 1 does not hold
and that condition 2 holds:
with the help of previous results, we arrive at a contradiction.

Assume that condition 1 does not hold.
We prefer to formulate this negated condition in terms of $H$, not $D$,
as in Remark~\ref{remark:b}.
Let $\cG$ be the (compact) Grassmann space of
hyperplanes $H^\sharp \subset H$.
By hypothesis, for every $H^\sharp \in \cG$
there exists $v_0 \in H^\sharp$ and $j_1$ ($1 \le j_1 \le k$)
such that $(B^{(j_1)}(0))v_0 \ne 0$.
By continuity, there then exist
$\omega_1 \in D^\ast$ (the dual of $D$)
and a compact ball $W_1 \subset H$ with $v_0 \in \interior(W_1)$ and
such that $\omega_1(B^{(j_1)}(0))v_1 > 0$ for all $v_1 \in W_1$.
By compactness of $\cG$, there exists a finite family
of triples $((j_q,\omega_q,W_q))_q$
where $j_q \in \ZZ$, $1 \le j_q \le k$, $\omega_q \in D^\ast$,
$W_q \subset H$ is a compact ball
with the following properties.
For all $H^\sharp \in \cG$ there exists $q$
with $H^\sharp \cap \interior(W_q) \ne \varnothing$.
We also have $\omega_q(B^{(j_q)}(0))v > 0$ for all $v \in W_q$.

Assuming condition 2,
let $(\tilde\theta_\ell)$ be a sequence of good smooth actions
with the following properties.
The $C^k$ distance between $\tilde\theta_\ell$ and
the initial compact action $\theta$ is smaller than $1/2^\ell$
(recall that distances are computed between vector fields
$\tilde X_{\ell,j}$ and $X_j$).
{We may assume
(after perhaps a change of basepoint of the torus)
that there exists a fixed $p_\bullet \in M$
such that, for all $\ell$,
the orbit of $\tilde\theta_\ell$ containing $p_\bullet$
is not compact.}
For each $\ell$, perform the above constructions
to define local diffeomorphisms $F_{\ell,i}$,
real functions $f_{\ell,i}$ and
hyperplanes $H'_{\ell} \subset H$.
By taking subsequences,
we may assume that either all $H'_\ell$ are rational
or that all $H'_\ell$ are irrational.
In the irrational case, we construct tori $T_{\ell,0}$;
in the rational case we construct $T_{\ell,1}^+$.
In either case, 
we construct homomorphisms $\tau_\ell: H'_{\ell} \to D$.

By compactness of $\cG$, we may assume that the sequence $(H'_\ell)$
converges to $H'_\infty \in \cG$.
Take $q_\bullet$ (fixed from now on)
such that $H'_\infty \cap \interior(W_{q_\bullet}) \ne \varnothing$.
By taking a subsequence, we may assume that
$H'_\ell \cap \interior(W_{q_\bullet}) \ne \varnothing$
for all $\ell$.
Take $k_\bullet = j_{q_\bullet} \le k$,
$\omega_\bullet = \omega_{j_{q_\bullet}} \in D^\ast$
and $W_\bullet = W_{q_\bullet} \subset H$.
We may also construct a convergent sequence $(v_\ell)$,
$v_\ell \in H'_\ell \cap \interior(W_\bullet)$
with limit $v_\infty \in H'_\infty \cap W_\bullet$.
We have
$\omega_\bullet(B^{(k_\bullet)}(0))v > 0$
for all $v \in W_\bullet$.
Let $h_0: [-\epsilon,\epsilon] \to \RR$
be the fixed smooth function
$h_0(z) = \omega_\bullet(B(z))v_\infty$:
we have $h_0^{(k_\bullet)}(0) > 0$.
Take real numbers $0 < B_{k_\bullet} \le B_{k_\bullet + 1}$
(fixed from now on)
such that $h_0^{(k_\bullet)}(0) \ge B_{k_\bullet}$
and $\| h_0 \|_{C^{k_\bullet + 1}} \le  B_{k_\bullet + 1}$.
Apply Proposition~\ref{prop:fh}
with $k = k_\bullet$ to obtain $C_{k_\bullet} > 1$
and fix $\eta = (\min\{1,B_{k_\bullet}\})/(2C_{k_\bullet}) > 0$.

We first argue the irrational case,
i.e., $H'_\ell$ irrational for all $\ell$;
we shall later come back for the rational case.
As above, assume for concreteness that
$0 \le h_{\ell,i} \le h_{\ell,n}$ for all $i$, $1 \le i \le n$.
Take $f = f_\ell = f_{\ell,n,p_\bullet}$:
notice that $f(0) > 0$ and that for sufficiently large $\ell$
we have $\| f - f_0 \|_{C^k} < \eta$.
{From now on, such $\ell = \ell_\bullet$ is fixed.}
Define $z_i = f^i(0)$ (for $-4n \le i \le 4n$)
and $p_i = F_n^i(p_\bullet) = (0,\ldots,0,z_i)$.
Notice that $T_{0}(p_i) = F_n^i[T_0]$
form a family of almost parallel and almost horizontal $C^0$ tori.
Let $\tau_i \in \cL(H';D)$ be defined by Equation~\eqref{equation:tau}
for the torus $T_0(p_i)$
{(notice that $H' = H'_{\ell_\bullet}$).}
By Lemma~\ref{lemma:stable24}, $\tau_i = \tau_0$ for all $i$.
In particular, if we define $\bt \in \cL(H';D)$ by
\begin{equation}
\label{equation:bt}
\bt = [z_0,z_1,  \ldots, z_{k_{\bullet}};
\tau_0,\tau_1,  \ldots, \tau_{k_{\bullet}}] 
\end{equation}
we have $\bt = 0$.
We proceed to compute estimates for $\bt$
based on Equation~\eqref{equation:tau}.

\bigbreak

For $y = (y_1, \ldots, y_n) \in [0,1)^n$,
we define a smooth function 
$f_y: (-\epsilon/2,\epsilon/2) \to (-\epsilon,\epsilon)$
{as follows.}
Consider the path $\gamma: [0,1] \to H$,
$\gamma(t) = (y_1, \ldots, y_n)t$.
Given $z \in (-\epsilon/2,\epsilon/2)$,
lift $\gamma$ to define $\tilde\gamma: [0,1] \to M$,
$\tilde\gamma(0) = (0,\ldots,0,z)$: we have
$\tilde\gamma(1) = (y,f_y(z)) = (y_1,\ldots,y_n,f_y(z))$.
Notice that $f_y$ is $C^k$-near the identity.
Define $\zeta: [0,1)^n \to (-\epsilon,\epsilon)$ by
$(y,f_y(\zeta(y))) \in T_0$:
the function $\zeta$ is continuous 
and assumes values near $0$ but is in principle not smooth.
When $y$ is implicit, we write
$\tilde z_i = f^i(\zeta(y))$ so that
$\tilde z_i = f^i(\tilde z_0)$ and
$(y,f_y(\tilde z_i)) \in T_0(p_i)$.
Let $\beta_y: (-\epsilon/2,\epsilon/2) \to \cL(H';D)$
be defined by $\beta_y(z) = \tilde B(y,f_y(z))$:
$\beta_y$ is $C^k$-near $B$
(see Equations~\eqref{eq:bt} and \eqref{eq:tildeb}
for the definition of $\tilde B$ and of $\tilde b_{ji}$).
For $y \in [0,1)^n$, define $h_y: (-\epsilon/2,\epsilon/2) \to \RR$
by $h_y(z) = \omega_\bullet \beta_y(z) v_{\ell_\bullet}$.
By Equations~\eqref{equation:tau} and \eqref{equation:bt}
(and leaving $z_0, \ldots, z_{k_\bullet}$ implicit)
we have
\begin{equation}
\label{equation:bt2}
\begin{aligned}
\omega_\bullet \bt v_{\ell_\bullet} &= \omega_\bullet
\left( \int_{[0,1)^n}
[ \beta_y((\zeta(y))), \beta_y(f(\zeta(y))),\ldots,
\beta_y(f^{k_\bullet}(\zeta(y))) ] \; dy 
\right) v_{\ell_\bullet} \\
&= \int_{[0,1)^n}
[h_y(\tilde z_0), h_y(\tilde z_1), \ldots ,
h_y(\tilde z_{k_\bullet}) ]\; dy.
\end{aligned}
\end{equation}
For sufficiently large $\ell$ we have
$\| h_y - h_0 \|_{C^k} < \eta$ for all $y \in [0,1)^n$.
By Proposition~\ref{prop:fh}
we then have 
\[ \forall y \in [0,1)^n, \quad 
[h_y(\tilde z_0), h_y(\tilde z_1), \ldots ,
h_y(\tilde z_{k_\bullet}) ] > 0. \]
We thus have $\omega_\bullet \bt v_{\ell_\bullet} > 0$,
contradicting $\bt = 0$ and completing
the proof of the irrational case.

The rational case is similar with a few minor adjustments.
We may assume that $(0,\ldots,0,0) \in T_1^+$:
otherwise perform a small vertical translation.
The averaging integral has as domain a subtorus
which may change as a function of $\ell$.
This complicates notation 
but in no significant way affects the argument.
\end{proof}

We recall the structure of the proof of the main theorem.

\begin{proof}[Proof of Theorem~\ref{theo:ck}]
Condition 3 trivially implies condition 2.
The implication $1 \to 3$ is proved in Proposition~\ref{prop:1to3}.
The implication $2 \to 1$ is proved in Proposition~\ref{prop:2to1}.
\end{proof}

% By the previous paragraph (and possibly taking subsequences)
% we may assume that there exists $h_\bullet$ (fixed)
% and $w_{\ell} \in \interior(W_{h_\bullet})$

% Let $f = f_i^{+}$ and $z_j = f^{j}(0)$.
% Let $T_j$, $0 \le j \le k$,
% be the family of tori constructed above for the point
% $(0,\ldots,0,z_j)$. 
% Let $\tau_j$ be $\tau$ (as defined above)
% computed for the torus $T_j$.
% By Lemma~2.4 from \cite{stable},
% $\tau_0 = \cdots = \tau_j = \cdots = \tau_k$.
% We therefore have $[z_0,\ldots,z_k;\tau_0,\ldots,\tau_k] = 0$.
% We use Lemma~\ref{lemma:fh} to prove that
% $[z_0,\ldots,z_k;\tau_0,\ldots,\tau_k] \ne 0$, a contradiction.

\bigbreak

%%%%%%%%%%%%%%%%%%%%%%%%%%%%%%%%%%%%%%%%%%%%%%%%%%%%%%%%%%%%%%%%%%%%%%%%%%%%%%

% \bigbreak
% \newpage

%\bibliography{gs}
%\bibliographystyle{plain}
%\printbibliography

\medskip

\noindent
\footnotesize
Carlos Gustavo T. de A. Moreira \\
IMPA, Estr. Dona Castorina 110,
Rio de Janeiro, RJ 22460-320, Brazil.  \\
\url{gugu@impa.br}

\noindent
\footnotesize
Nicolau C. Saldanha \\
Departamento de Matem\'atica, PUC-Rio, \\
R. Marqu\^es de S. Vicente 255,
Rio de Janeiro, RJ 22451-900, Brazil.  \\
\url{saldanha@puc-rio.br}

\end{document}